\newcommand\numberthis{\addtocounter{equation}{1}\tag{\theequation}} 
\def\@biblabel#1{}
\def\@biblabel#1{}
\numberwithin{equation}{section}
\newtheorem{theorem}{Theorem}[]
\newtheorem{remark}{Remark}[]
\begin{document}
	\begin{center}
		{\Large\bf Record-Based Transmuted Generalized Linear Exponential Distribution with Increasing, Decreasing and Bathtub Shaped Failure Rates}  \\
		\vspace{0.3in}
				{\bf Mohd. Arshad$^{a,b}$, Mukti Khetan$^{c}$, \bf Vijay Kumar$^{d,}$\footnote{Corresponding author. E-mail addresses: ~arshad.iitk@gmail.com, arshad@iiti.ac.in (Mohd. Arshad), \\   mukti.khetan11@gmail.com (Mukti Khetan), dvijay.iitb@gmail.com
						(Vijay Kumar), ashokiitb09@gmail.com (Ashok Kumar Pathak).} and Ashok Kumar Pathak$^{e}$}
				\\
		$^{a}$Department of Mathematics,  Indian Institute of Technology Indore, Simrol, Indore, India. \\
			$^{b}$Department of Statistics and Operations Research, 
				Aligarh Muslim University, Aligarh, India.  \\
			$^{c}$Department of Mathematics, Amity University Mumbai, Maharashtra, India. \\
		$^{d}$Department of Mathematics, Indian Institute of Technology Bombay, Mumbai, India. \\		
		 $^{e}$Department of Mathematics and Statistics, Central University of Punjab, Bathinda, India.
	\end{center}
	\begin{abstract}
	The linear exponential distribution is a generalization of the exponential and Rayleigh distributions. This distribution is one of the best models to fit data with increasing failure rate (IFR). But it does not provide a reasonable fit for modeling data with decreasing failure rate (DFR) and bathtub shaped failure rate (BTFR). To overcome this drawback,  we propose a new record-based transmuted generalized linear exponential (RTGLE) distribution by using the technique of Balakrishnan and He (2021). The family of RTGLE distributions is more flexible to fit the data sets with IFR, DFR, and BTFR, and also generalizes several well-known models as well as some new record-based transmuted models. This paper aims to study the statistical properties of RTGLE distribution, like, the shape of the probability density function and hazard function, quantile function and its applications, moments and its generating function, order and record statistics, R\'enyi entropy. The maximum likelihood estimators, least squares and weighted least squares estimators, Anderson-Darling estimators, Cram\'er-von Mises estimators of the unknown parameters are constructed and their biases and mean squared errors are reported via Monte Carlo simulation study. Finally, the real data set based on failure time illustrates the goodness of fit and applicability of the proposed distribution; hence, suitable recommendations are forwarded.  
%
%
	\end{abstract}
	
	\noindent \emph{Keywords}: Record values; Transmuted distributions; Generalized linear exponential distribution; Maximum likelihood estimation; Lambert W function; Increasing, decreasing and bathtub shaped failure rates;  
	\vskip 2mm
	\section{Introduction}\label{sec1}
	The linear exponential (LE) distribution is a generalization of the exponential and Rayleigh distributions. It is a well-known probability distribution for modeling lifetime data in reliability and related experimental areas. Several models with monotonically increasing failure rate (IFR) and decreasing failure rate (DFR) have been studied in the reliability and survival analysis. The LE distribution is one of the best models to fit data with IFR. But it does not provide a reasonable fit for modeling data with DFR and bathtub shaped failure rate (BTFR).  To overcome this drawback, Mahmoud and Alam (2010) developed the generalized linear exponential distribution (GLE) for modeling the data with IFR, DFR and BTFR. The GLE distribution includes the exponential, Rayleigh, Weibull, linear exponential, and generalized Rayleigh distributions as sub-models. Several generalizations of the exponential and Weibull distributions have been developed in the literature to provide more flexible distributions for modeling of complex data sets. Several researchers constructed new probability models by using different techniques of construction. Thousands of new probability models have been developed and studied in the last two decades. Still, there are some requirements to develop a more flexible probability model than the existing classical one. Some of the more popular techniques of construction of a new probability model were studied by Kumaraswamy (1980), Marshall and Olkin (1997, 2007), Gupta and Kundu (1999), Eugene et al. (2002), Arnold et al. (2006), Ferreira and Steel (2006, 2007), Shaw and Buckley (2007, 2009), Jones (2009), Zografos and Balakrishnan (2009), Cordeiro and De Castro (2011), Singla et al. (2012), Mahmoudi and Jafari (2012, 2017), Almalki and Yuan (2013), Almalki and Nadarajah (2014), Azzalini (2013), Sarhan et al. (2013), Balakrishnan and Risti\'c (2016), He et al. (2016), Tahir and Cordeiro (2016), Zeng et al. (2016), Al-Babtain et al. (2017), Granzotto et al. (2017), Karakaya et al. (2017), Mahdavi and Kundu (2017), Ahmad and Ghazal (2020), Peña-Ramírez et al. (2020), Maurya and Nadarajah (2020), Tahir et al. (2020), Shakhatreh et al. (2021), Ghosh et al. (2021).
	
	Shaw and Buckley (2009) introduced a quadratic rank transmutation map to generate a new family of distributions. This technique is a powerful tool to generate the skewed probability distributions and is used in more than 400 research articles in recent past. The distribution function (DF) of the transmuted distribution can be obtained by 
	\begin{equation}\label{eq1}
		F_{\lambda}(x)=(1+\lambda) G(x)-\lambda G^2(x), \; x \in \mathbb{R},
	\end{equation}
	where $G(\cdot)$ denotes the DF of the baseline distribution and $-1 \leq \lambda \leq 1$. The transmuted DF, given in (\ref{eq1}), can be obtained by the mixture of the DFs of the smallest and the largest order statistics from the baseline distribution in a sample of size $2$. Kozubowski and Podgórski (2016) shown that the transmuted distributions are a special case of extremal distributions. Granzotto et al. (2017) extended the quadratic rank transmuted distribution to the cubic rank transmuted distribution by considering the mixture of the DF'S of the three order statistics from the baseline distribution. The rank cubic transmuted distribution can be obtained by  
	\begin{equation*}
		F_{\lambda_1,\lambda_2}(x)=\lambda_1 G(x)+ (\lambda_2-\lambda_1) G^2(x)+(1-\lambda_2) G^3(x), \; x \in \mathbb{R},
	\end{equation*}
	where $0\leq \lambda_1 \leq 1$, $-1 \leq \lambda_2 \leq 1$, and $G(\cdot)$ denotes the DF of the baseline distribution. A large number of papers devoted to elementary properties of various new quadratic and cubic transmuted distributions in the literature, see Aryal and Tsokos (2011), Elbatal and Aryal (2013),  Merovci (2013), Khan and King (2014), Tian et al. (2014), Nofal et al. (2017), Rahman et al. (2020), Al-Babtain et al. (2020).
	
	Recently, Balakrishnan and He (2021) proposed a record-based transmuted map to generate new probability models. The record-based transmuted distributions can be obtained by 
	\begin{equation}\label{eq3}
		F_{p}(x)=G(x)+p\bar{G}(x)\log\bar{G}(x), \; x \in \mathbb{R},
	\end{equation}
	where $0\leq p \leq 1$, and $G(\cdot)$ and $\bar{G}(\cdot)$ denote the DF and the survival function (SF) of the baseline distribution, respectively. The authors introduced a few new record-based transmuted (RT) probability distributions like RT-exponential (RTE) distribution, RT-Linear exponential (RTLE) distribution, RT-Weibull (RTW) distribution, etc. Tani\c{s} and Sarco\u{g}lu (2020) studied the properties and inferential issues of the RTW distribution.
	
	 This paper aims to propose a new generalization of the GLE distribution by using the technique of Balakrishnan and He (2021) and study some statistical properties. The new generalization provides RT-generalized linear exponential (RTGLE) distribution, which is a more general class of probability distributions including the RTE distribution, RTLE distribution, RTW distribution, Weibull distribution, etc. (see Table \ref{table1} for subclass of distributions). RTGLE distribution is flexible to fit the data sets with IFR, DFR and BTFR. The article's structure is as follows: Section \ref{sec2} discusses the genesis of the record-based transmuted approach. Section \ref{sec3} provides the DF, probability density function (PDF), and hazard function (HF) of the RTGLE distribution. Section \ref{sec4} deals with statistical properties, like, the shape of the PDF and HF, quantile function and its applications, moments and generating function, order and record statistics, R\'enyi entropy. In Section \ref{sec5}, the maximum likelihood estimators, least squares and weighted least squares estimators, Anderson-Darling estimators, Cramer-von Mises estimators of the parameters are formulated and their biases and mean square errors are calculated. Section \ref{sec6} explores the Monte Carlo simulation to exhibit the applicability of the proposed model. Finally, in Section \ref{sec7}, the simulation results are supplemented with numerical demonstrations using real data.

\section{Genesis of the Record-Based Transmuted Distributions}\label{sec2}
	The record-based transmuted map has a simple form, given in (\ref{eq3}), to generate new probability models and is proposed by Balakrishnan and He (2021). The construction of this map is as follows: Let $X_{1}, X_{2}, \ldots $ be a sequence of independent and identically distributed (IID) random variables with DF $G(\cdot)$. Let $X_{U(1)}$ and $X_{U(2)}$ be the first two  upper records from this sequence of IID random variables (for more details, see an excellent book by Arnold et al. (1998)). Define a random variable $Y$ as 
	\begin{align*}
		Y=\begin{cases}
			X_{U(1)},& \text { with probability } 1-p \\
			X_{U(2)},& \text { with probability } p,
		\end{cases}
	\end{align*}
	where $p \in [0, 1]$. Then, the DF of $Y$ can be obtained as 
	\begin{align*}\label{eq1.4}
		F_{Y}(x) &=(1-p) P\left(X_{U(1)} \leq x\right)+p P\left(X_{U(2)} \leq x\right) \\
		&=(1-p) G(x)+p \left[1-\bar{G}(x) \sum_{k=0}^{1} \frac{(-\log \bar{G}(x))^{k}}{k!}\right] \\
		&=(1-p) G(x)+p [1-\bar{G}(x)(1-\log \bar{G}(x))] \\
		&=(1-p) G(x)+p [G(x)+\bar{G}(x)\log\bar{G}(x)]\\
		&=G(x)+p\bar{G}(x)\log\bar{G}(x),\; x \in \mathbb{R}, \numberthis
	\end{align*}
	where $\bar{G}(x)=1-G(x)$ denotes the SF of the baseline distribution. The probability density function (PDF) and the hazard function (HF) are, respectively, given by 
	\begin{equation}\label{eq1.5}
		f_{Y}(x)=g(x)[1-p-p\log\bar{G}(x))], \; x \in \mathbb{R},
	\end{equation}
	and
	\begin{equation}\label{eq1.6}
		h_{Y}(x)=h_{X}(x) \frac{1-p-p \log\bar{G}(x)}{1+p \log \bar{G}(x)}, \; x \in \mathbb{R},
	\end{equation}
	where $g(x)$ is the PDF of the baseline distribution, and  $h_{X}(x)=\displaystyle \frac{g(x)}{\bar{G}(x)}$ is the HF of the baseline distribution.
	
	
	\section{Record-Based Transmuted Generalized Linear Exponential Distribution}\label{sec3}
	This section deals with the RTGLE distribution. Let us consider the  generalized linear exponential (GLE) distribution with parameters  $(\alpha,\beta,\gamma)$ as the baseline distribution. The PDF, DF and HF of the GLE$(\alpha, \beta,\gamma)$ distribution are, respectively, defined below
	\begin{equation*}
		g(x)=\gamma (\alpha+\beta x)\left(\alpha x+\frac{\beta x^{2}}{2} \right)^{\gamma-1} e^{-\left(\alpha x+\frac{\beta x^{2}}{2} \right)^{\gamma}},
		\end{equation*}

	\begin{equation*}
		G(x)=1-e^{-\left(\alpha x+\frac{\beta x^{2}}{2}\right)^{\gamma}},
	\end{equation*}
	and
	\begin{equation}\label{BHRF}
		k(x)=\gamma(\alpha+\beta x)\left(\alpha x+\frac{\beta x^{2}}{2}\right)^{\gamma-1},
	\end{equation}
	where $x>0$, $\alpha\ge 0, \beta \geq 0$ and $\gamma>0$, but $\alpha$ and $\beta$ cannot be zero together.  
	The DF, PDF and HF of RTGLE$(\alpha,\beta,\gamma,p)$ distribution constructed by substituting the above expression in  \eqref{eq1.4}-\eqref{eq1.6}, we get
	\begin{equation}\label{eq2.3}
		F(x) =1-\left[1+p\left(\alpha x+\frac{\beta x^{2}}{2}\right)^{\gamma}\right]e^{-\left(\alpha x+\frac{\beta x^{2}}{2}\right)^{\gamma}},
	\end{equation}
	\begin{equation}\label{PDF1}
		f(x) =\gamma(\alpha+\beta x)\left(\alpha x+\frac{\beta x^{2}}{2}\right)^{\gamma-1}
		\left[1-p+p\left(\alpha x+\frac{\beta x^{2}}{2}\right)^{\gamma}\right]
		e^{-\left(\alpha x+\frac{\beta x^{2}}{2}\right)^{\gamma}},
	\end{equation}
	and 
	\begin{equation}\label{HRF}
		h(x)=\frac{\gamma(\alpha+\beta x)\left(\alpha x+\frac{\beta x^{2}}{2}\right)^{\gamma-1}\left[1-p+p\left(\alpha x+\frac{\beta x^{2}}{2}\right)^{\gamma}\right]}{1+p\left(\alpha x+\frac{\beta x^{2}}{2}\right)^{\gamma}},
	\end{equation}
	where $x>0$, $\alpha\ge 0, \beta \geq 0, \gamma>0$, and $p \in [0,1]$, but $\alpha$ and $\beta$ cannot be zero together. If $p=0$, the DF $F(x)$ of RTGLE$(\alpha,\beta,\gamma,p)$ distribution becomes the DF $G(x)$ of GLE$(\alpha,\beta,\gamma)$ (baseline) distribution. It can be easily verify that $F(x) \leq G(x), \; \forall x>0.$ This implies that the tail probabilities under RTGLE distribution are larger than or equal to the tail probabilities under GLE distribution. Several well known probability models can be reduced from the RTGLE distribution, and some possible well-known sub-models are reported in Table \ref{table1}. 
	\begin{table}[htbp]
		\centering
		\caption{Some well known probability models reduced from RTGLE$(\alpha,\beta,\gamma,p)$ distribution}
		\scalebox{0.88}{
		\begin{tabular}{|l|l|l|l|}
			\hline
			S.No.	& Model & Survival Function $\displaystyle \bar{F}(x)=1-F(x)$  & Values of parameters  \\ 
			\hline
			 1. & Exponential  & $e^{-\alpha x}$ & $p=0, \beta=0, \gamma=1$ \\
			 2. &Rayleigh  & $e^{-\frac{\beta x^{2}}{2}}$ & $p=0, \alpha=0, \gamma=1$ \\
			 3. & Weibull & $e^{-\left(\alpha x\right)^{\gamma}}$ & $p=0, \beta=0$ \\
			 4. & Linear exponential  & $e^{-\left(\alpha x+\frac{\beta x^{2}}{2}\right)}$ & $p=0, \gamma=1$ \\
			 5. &Generalized linear exponential  & $e^{-\left(\alpha x+\frac{\beta x^{2}}{2}\right)^{\gamma}}$ & $p=0$ \\
			 6.&	RT-exponential  & $\left[1+p\alpha x \right]e^{-\alpha x}$ & $\beta=0,\gamma=1$ \\
			 7. &	RT-Rayleigh  & $\left[1+\frac{p \beta x^{2}}{2} \right]e^{-\frac{\beta x^{2}}{2}}$ & $\alpha=0, \gamma=1$ \\
				8. & RT-Weibull  & $\left[1+p\left(\alpha x\right)^{\gamma} \right]e^{-\left(\alpha x\right)^{\gamma}}$ & $\beta=0$ \\
			 9. &RT-Linear exponential  & $\left[1+p\left(\alpha x+\frac{\beta x^{2}}{2}\right) \right]e^{-\left(\alpha x+\frac{\beta x^{2}}{2}\right)}$ & $\gamma=1$ \\
			\hline
			
		\end{tabular}
	}
\label{table1}
	\end{table}
	\newpage
	\section{Statistical Properties}\label{sec4}
	\subsection{Shapes of the Probability Density and Hazard Functions}
	This subsection deals with the shapes of the PDF and HF of RTGLE$(\alpha,\beta,\gamma,p)$ distribution. For sufficiently small $x$, i.e., $x \rightarrow 0$, we have 
	$$f(x) \sim \alpha \gamma (1-p) \left(\alpha x+ \frac{\beta x^2}{2}\right)^{\gamma-1}, \; p \neq 1,$$
	and, for sufficiently large $x$, i.e., $x \rightarrow \infty$, we have 
	$$f(x) \sim \gamma p (\alpha+\beta x) \left(\alpha x+ \frac{\beta x^2}{2}\right)^{2\gamma-1} e^{-\left(\alpha x+ \frac{\beta x^2}{2}\right)^{\gamma}}, \; p \neq 0.$$
	It may notice that the lower tails of the PDF grow polynomially  while the upper tails decay exponentially. The PDF plots for different values of parameters $(\alpha,\beta,\gamma,p)$ are shown in Figure \ref{FigPDF}. Moreover, the following theorem provides the shape of the PDF of the RTGLE distribution.  
	\begin{theorem}\label{Thm1}
		The PDF of RTGLE$(\alpha,\beta,\gamma,p)$ distribution is unimodal if $\gamma \geq 1$, and is monotonically decreasing if $0<\gamma<0.5$ and $\beta=0$.
	\end{theorem}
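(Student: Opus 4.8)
My plan is to reduce the sign of $f'$ to the sign of a single explicit function and then analyse that function. Write $u=u(x)=\alpha x+\tfrac{\beta}{2}x^{2}$, so that $u'=\alpha+\beta x>0$ and $u''=\beta$ on $(0,\infty)$, and set $v=u^{\gamma}$ and $M=1-p+pv>0$. Since $f>0$, the sign of $f'$ equals that of $(\log f)'$. Starting from \eqref{PDF1} I would differentiate $\log f=\log\gamma+\log u'+(\gamma-1)\log u+\log M-v$ and multiply through by the positive quantity $u\,u'M$; grouping the terms carrying $(u')^{2}$ then shows that the sign of $f'(x)$ equals the sign of
\[
\Psi(x)=(u')^{2}N(v)+\beta\,u\,M,\qquad N(v)=-\gamma p\,v^{2}+\bigl[p(3\gamma-1)-\gamma\bigr]v+(\gamma-1)(1-p),
\]
where $N$ is a quadratic in $v$ with $N(0)=(\gamma-1)(1-p)$. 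Everything then follows from the behaviour of $\Psi$.

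For the monotone statement I take $\beta=0$, so $u'=\alpha$, $\beta u=0$ and $\Psi=\alpha^{2}N(v)$; the shape is dictated purely by the sign of $N(v)$ for $v>0$. When $0<\gamma<\tfrac12$ the leading coefficient $-\gamma p$ and the constant $(\gamma-1)(1-p)$ are both $\le 0$, and I would check the linear coefficient is negative as well: if $3\gamma\le1$ it is $\le-\gamma$, while if $3\gamma>1$ then $\tfrac{\gamma}{3\gamma-1}>1\ge p$ (the first inequality being equivalent to $\gamma<\tfrac12$), so $p(3\gamma-1)<\gamma$. Hence $N(v)<0$ for all $v>0$, giving $f'<0$ and a monotonically decreasing density.

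For unimodality I split on $\beta$. If $\gamma\ge1$ and $\beta=0$, again $\Psi=\alpha^{2}N(v)$, but now $N$ is concave (affine if $p=0$) with $N(0)=(\gamma-1)(1-p)\ge0$ and $N(v)\to-\infty$; thus $N$ has a single positive root $v^{*}$, is positive on $(0,v^{*})$ and negative afterwards. Since $v=(\alpha x)^{\gamma}$ increases in $x$, $f$ rises then falls, so it is unimodal.

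The real work is the case $\gamma\ge1$, $\beta>0$. A comparison of polynomial degrees in $x$ shows $\Psi(x)\to-\infty$ as $x\to\infty$. I would then prove the key claim that every zero of $\Psi$ is a strict down-crossing, i.e.\ $\Psi'(x_{0})<0$ whenever $\Psi(x_{0})=0$: this makes the zeros isolated and one-directional, so $\Psi$ has at most one zero, and with $\Psi(\infty)=-\infty$ it must be either negative throughout or positive-then-negative---in both cases $f$ is unimodal. To establish the claim I differentiate, $\Psi'=2\beta u'N(v)+(u')^{2}N'(v)v'+\beta u'M+\beta u\,pv'$, and at a zero substitute $N(v)=-\beta uM/(u')^{2}$. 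Using the identity $(u')^{2}-2\beta u=\alpha^{2}$ the first and third terms collapse, leaving
\[
\Psi'(x_{0})=\frac{\alpha^{2}\beta M}{u'}+\beta u\,pv'+(u')^{2}v'\,N'(v).
\]
Because a zero forces $N(v)\le0$ and $N$ is concave with $N(0)\ge0$, the argument $v$ lies to the right of the vertex of $N$, so $N'(v)\le0$ and the last term is $\le0$ while the first two are $\ge0$. The hard part will be showing the negative term wins; I would do this by eliminating $N'$ via $vN'(v)=2N(v)-\bigl[p(3\gamma-1)-\gamma\bigr]v-2(\gamma-1)(1-p)$ together with the zero-relation for $N(v)$, reducing $\Psi'(x_{0})<0$ to a rational inequality in $v\ge v^{*}$ and $\rho:=\alpha^{2}/(u')^{2}\in(0,1)$. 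Verifying this inequality is the main obstacle of the proof.
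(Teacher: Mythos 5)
Your reduction is algebraically sound: your $\Psi(x)=(u')^{2}N(v)+\beta u M$ is indeed $u\,u'M$ times $(\log f)'$, and your treatment of the two $\beta=0$ cases is complete and correct (in fact more explicit than the paper, which dismisses the decreasing case with ``it is easy to verify''). But the proposal has a genuine gap exactly where you flag it: the case $\gamma\ge 1$, $\beta>0$, which is the main content of the theorem. There you reduce unimodality to the claim that $\Psi'(x_{0})<0$ at every zero of $\Psi$, arrive at
\[
\Psi'(x_{0})=\frac{\alpha^{2}\beta M}{u'}+\beta u\,p v'+(u')^{2}v'\,N'(v),
\]
and then leave unproven the decisive inequality that the negative term dominates the two positive ones, describing it as ``the main obstacle.'' An argument that ends at its main obstacle is not a proof; as it stands, the statement is established only for $\beta=0$.

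The gap closes at once if you change variables the way the paper does, rather than working in $x$. Put $z=v=\left(\alpha x+\beta x^{2}/2\right)^{\gamma}$; since $x\mapsto z$ is increasing, shape properties transfer, and writing $\eta(z)$ for the density expressed in $z$ one gets
\[
\frac{\partial \ln\eta}{\partial z}=\frac{\beta z^{\frac{1}{\gamma}-1}}{\gamma\left(2\beta z^{1/\gamma}+\alpha^{2}\right)}+\Bigl(1-\frac{1}{\gamma}\Bigr)\frac{1}{z}+\frac{p}{1-p+pz}-1,
\]
whose $z$-derivative is a sum of three manifestly nonpositive terms when $\gamma\ge 1$, i.e.\ $\ln\eta$ is concave in $z$. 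Hence $\partial\ln\eta/\partial z$ decreases from $+\infty$ (for $\gamma>1$) to $-1$ and changes sign at most once, which is unimodality with no case analysis on $\beta$. The connection to your computation is exact: one can check that $\Psi=\gamma v\,(u')^{2}M\cdot\partial\ln\eta/\partial z$, so the down-crossing inequality you could not verify is precisely the negativity of $\partial^{2}\ln\eta/\partial z^{2}$ --- a one-line sign check in the $z$ variable, but an intractable rational inequality in $v$ and $\rho=\alpha^{2}/(u')^{2}$ in the $x$ variable. That substitution is the paper's whole trick, and it is the idea your proof is missing.
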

	\begin{proof}
		Let $z=\left(\alpha x+ \frac{\beta x^2}{2}\right)^{\gamma}$. On solving the quadratic equation for $x$, we get  $x=\frac{1}{\beta} \Bigl\{(2 \beta z^{\frac{1}{\gamma}}+\alpha^2)^{\frac{1}{2}}-\alpha\Bigr\}$. Since $x>0$, it follows that $z>0$. Now, we can write the PDF of RTGLE$(\alpha,\beta,\gamma,p)$ distribution as a function of $z$, and so
		$$\eta(z)=f\left(\frac{1}{\beta} \left\{(2 \beta z^{1/\gamma}+\alpha^2)^{1/2}-\alpha\right\}\right)=\gamma z^{\frac{\gamma-1}{\gamma}}(2 \beta z^{1/\gamma}+\alpha^2)^{1/2}
		(1-p+pz) e^{-z}.$$ 
		The first and second orders partial derivatives of $\ln(\eta(z))$ are, respectively, given by
		\begin{equation}\label{Pderivative}
			\frac{\partial \ln(\eta(z)) }{\partial z}= \frac{\beta z^{\frac{1}{\gamma}-1}}{\gamma (2 \beta z^{1/\gamma}+\alpha^2)}+ \left(1-\frac{1}{\gamma}\right) \frac{1}{z} +\frac{p}{(1-p+pz)} -1,
		\end{equation}
	and 
	\begin{equation}\label{Pderivative2}
		\frac{\partial^2 \ln(\eta(z)) }{\partial z^2}=-\frac{\beta z^{\frac{1}{\gamma}-2} \left\{\gamma\left( 2 \beta z^{\frac{1}{\gamma}} +\alpha^2\right)-\alpha^2\right\}}{\gamma^2\left( 2 \beta z^{\frac{1}{\gamma}} +\alpha^2\right)^2}-\left(1-\frac{1}{\gamma}\right) \frac{1}{z^2}-\frac{p^2}{(1-p+pz)^2}.
	\end{equation}
	For $\gamma \geq 1$, it follows from the equations (\ref{Pderivative}) and (\ref{Pderivative2}) that $\frac{\partial \ln(\eta(z)) }{\partial z}$ is a decreasing function of $z$ with $\lim_{z \rightarrow 0} \frac{\partial \ln(\eta(z)) }{\partial z}=+\infty$ and $\lim_{z \rightarrow \infty} \frac{\partial \ln(\eta(z)) }{\partial z}=-1<0$. This implies that the partial derivative of $\ln(\eta(z))$ will be changing sign positive to negative. Thus, the PDF of RTGLE distribution is unimodal if $\gamma \geq 1$. Similarly, for $0<\gamma<0.5$ and $\beta=0$, it is easy to verify that $\frac{\partial \ln(\eta(z)) }{\partial z}$ is an increasing function of $z$ with $\lim_{z \rightarrow 0} \frac{\partial \ln(\eta(z)) }{\partial z}=-\infty$ and $\lim_{z \rightarrow \infty} \frac{\partial \ln(\eta(z)) }{\partial z}=-1<0$. Hence, the PDF is decreasing in nature if $0<\gamma<0.5$ and $\beta=0$.     
	\end{proof}
\begin{remark}
	Note that Theorem \ref{Thm1} is unable to provide the shape of the PDF of RTGLE distribution when $\beta \neq 0$ and  $\gamma \in (0, 1)$; or $\beta=0$ and $\gamma \in [0.5, 1)$. We have plot the PDF of RTGLE distribution in Figure \ref{FigPDF} for different choices of parametric values. It can be observed from Figure \ref{FigPDF} that the PDF of RTGLE distribution is monotonically decreasing if $\gamma \in (0, 1)$. Thus, we conjectured that the shape of the PDF of RTGLE distribution depends on the parameter $\gamma$ only.   
\end{remark}
	Now, we will discuss the behavior of the HF given in (\ref{HRF}).  For sufficiently small $x$, i.e., $x \rightarrow 0$, we get
	$h(x) \sim \alpha \gamma  \left(\alpha x+ \frac{\beta x^2}{2}\right)^{\gamma-1}, $
	and, for sufficiently large $x$, i.e., $x \rightarrow \infty$, we get
	$h(x) \sim \gamma (\alpha+\beta x) \left(\alpha x+ \frac{\beta x^2}{2}\right)^{\gamma-1}.$ This implies that the lower and upper tails of the HF of RTGLE$(\alpha,\beta,\gamma,p)$ distribution behave polynomially. It is evident from equations (\ref{BHRF}) and (\ref{HRF}) that $h(x) \leq k(x)$ for all $x>0$. This implies that the record-based transmutation process has a dampening effect on the HF of the baseline distribution (see Balakrishnan and He (2021)). The HF plots for different values of parameters $(\alpha,\beta,\gamma,p)$ are given in Figure \ref{hf}. These plots show that RTGLE distribution has IFR, DFR, and BTFR. Moreover, the following theorem proves that the RTGLE$(\alpha,\beta,\gamma,p)$ distribution has IFR if $\gamma \geq 1$, and has DFR if $\beta=0, \gamma \in (0,0.5]$. 
	\begin{figure}[t]
		\caption{Graph of PDF}
		\centering
		\includegraphics[width=1\linewidth, height=8cm]{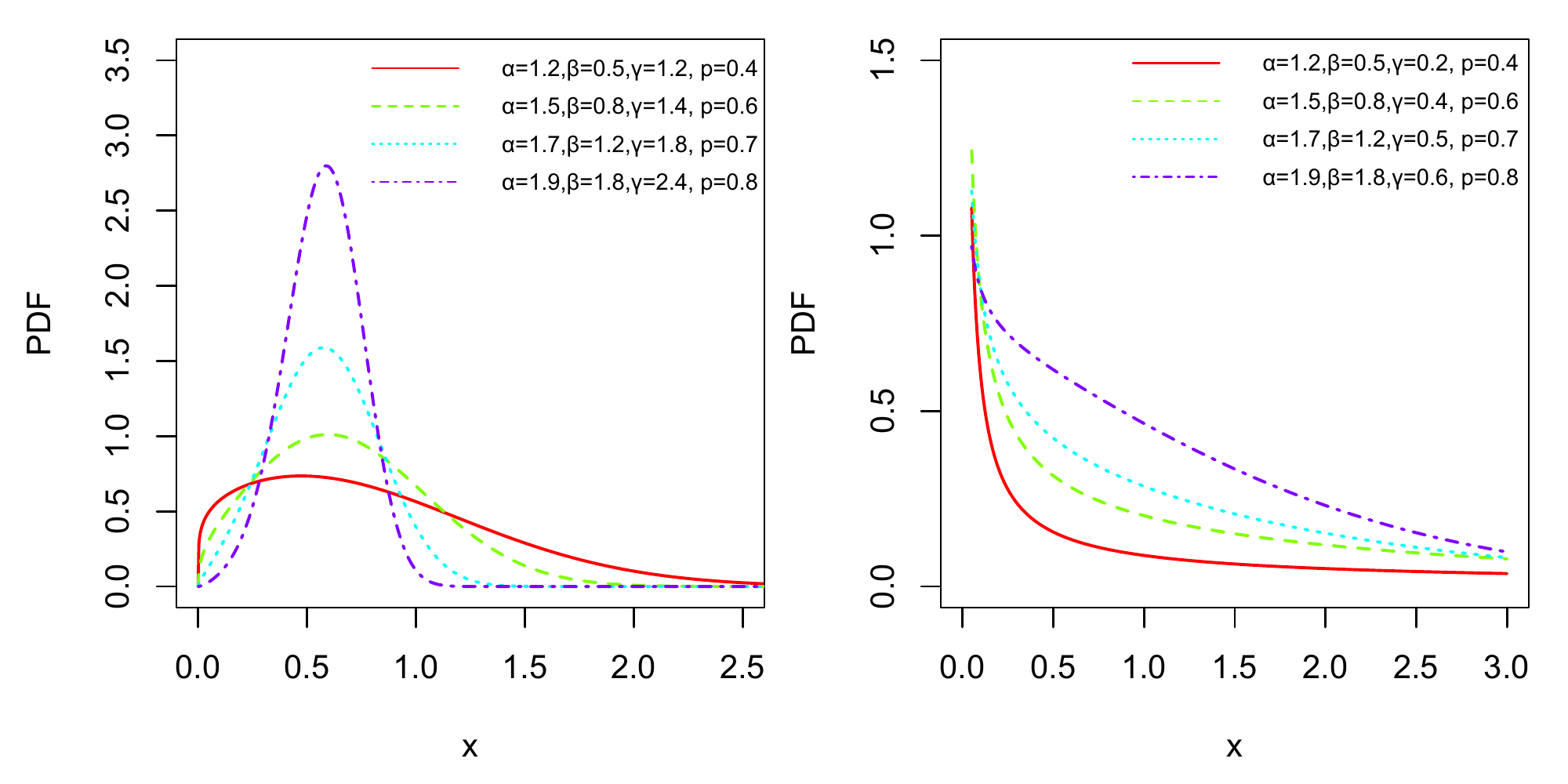} 
		\label{FigPDF}
	\end{figure}
	\begin{theorem}\label{thm2}
		Let $X$ be a random variable having RTGLE$(\alpha,\beta,\gamma,p)$ distribution, and the HF $h(x)$ of $X$ is given in equation (\ref{HRF}). Then, \\
		(i) the HF is an increasing function if $\gamma \geq 1$,\\
		(ii) the HF is a decreasing function if $\beta=0$ and $0<\gamma < 0.5$.
	\end{theorem}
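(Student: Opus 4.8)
The plan is to mirror the change-of-variable strategy used in the proof of Theorem~\ref{Thm1}. Setting $z=\left(\alpha x+\frac{\beta x^2}{2}\right)^{\gamma}$, I note that $z$ is a strictly increasing bijection of $x$ from $(0,\infty)$ onto $(0,\infty)$, so the monotonicity of $h$ in $x$ is equivalent to the monotonicity of $h$, viewed as a function of $z$, in $z$. Using $\alpha+\beta x=(2\beta z^{1/\gamma}+\alpha^2)^{1/2}$ and $\left(\alpha x+\frac{\beta x^2}{2}\right)^{\gamma-1}=z^{(\gamma-1)/\gamma}$, the HF in \eqref{HRF} becomes
\[
H(z)=\gamma\,(2\beta z^{1/\gamma}+\alpha^2)^{1/2}\,z^{\frac{\gamma-1}{\gamma}}\,\frac{1-p+pz}{1+pz}.
\]
Taking logarithms and differentiating yields
\[
\frac{d\ln H(z)}{dz}=\frac{\beta z^{\frac{1}{\gamma}-1}}{\gamma\,(2\beta z^{1/\gamma}+\alpha^2)}+\frac{\gamma-1}{\gamma z}+\frac{p^2}{(1-p+pz)(1+pz)},
\]
which reduces both parts to a sign analysis of three explicit terms.

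For part (i), I would simply observe that when $\gamma\geq 1$ each of the three summands is nonnegative: the first because $\beta\geq 0$, the second because $\gamma\geq 1$, and the third because it is a ratio of strictly positive quantities (note $1-p+pz>0$ and $1+pz>0$ for $z>0$). Hence $\frac{d\ln H}{dz}\geq 0$, so $H$ is nondecreasing in $z$ and the HF is increasing in $x$.

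For part (ii), setting $\beta=0$ annihilates the first term and leaves
\[
\frac{d\ln H(z)}{dz}=\frac{1}{z}\left[\frac{\gamma-1}{\gamma}+\frac{p^2 z}{(1-p+pz)(1+pz)}\right].
\]
The bracketed constant $\frac{\gamma-1}{\gamma}$ is negative (as $\gamma<1$) while the remaining term is nonnegative, so the task is to show the negative term dominates for \emph{all} $z>0$. I would do this by maximizing $g(z):=\frac{p^2 z}{(1-p+pz)(1+pz)}$. Writing $g(z)=\dfrac{p^2}{p^2 z+(2-p)p+(1-p)/z}$ and applying AM--GM to $p^2 z+(1-p)/z$, the denominator is minimized at $z=\sqrt{1-p}/p$, giving, after the substitution $u=\sqrt{1-p}$,
\[
\max_{z>0} g(z)=\frac{p}{2-p+2\sqrt{1-p}}=\frac{1-\sqrt{1-p}}{1+\sqrt{1-p}}.
\]
Thus $\frac{d\ln H}{dz}<0$ for every $z>0$ precisely when $\max_{z}g(z)<\frac{1-\gamma}{\gamma}$, which with the same substitution is equivalent to $2\gamma<1+u=1+\sqrt{1-p}$. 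Since $\sqrt{1-p}\geq 0$ for all admissible $p\in[0,1]$, the hypothesis $\gamma<\tfrac12$ forces $2\gamma<1\leq 1+\sqrt{1-p}$, so the HF is decreasing (the degenerate case $p=0$, where $g\equiv 0$, is handled trivially). I expect this sharp AM--GM bound — and the observation that the threshold $\gamma=\tfrac12$ emerges from the worst case $p=1$ — to be the crux; part (i) and the reduction in part (ii) are routine once the change of variable is established.
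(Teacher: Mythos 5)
Your proof is correct, and it takes a genuinely different route from the paper's. The paper proves both parts by appealing to Glaser's (1980) criterion: it takes the function $\eta(z)=f(x(z))$ from the proof of Theorem \ref{Thm1}, sets $\mathcal{U}(z)=-\partial^{2}\ln(\eta(z))/\partial z^{2}$ as in (\ref{uz1}), and concludes IFR from $\mathcal{U}>0$ when $\gamma\geq 1$ and DFR from $\mathcal{U}<0$ when $\beta=0$, $0<\gamma<0.5$. You never touch the density: you reparametrize the hazard itself as $H(z)$, compute $d\ln H/dz$ as a sum of three explicit terms, and do a sign analysis, with an AM--GM maximization in part (ii). Your route buys two things. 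First, your logical reduction is airtight: since $h(x)=H(z(x))$ and $z(x)$ is a strictly increasing bijection of $(0,\infty)$ onto itself, monotonicity of $h$ in $x$ is exactly monotonicity of $H$ in $z$. Glaser's criterion, by contrast, concerns $-(\ln f)''$ for a density in its own variable; the paper's $\eta(z)=f(x(z))$ drops the Jacobian $dx/dz$, so it is not a density on the $z$-scale, and the sign of $\mathcal{U}$ is not invariant under such reparametrizations. In fact, for $\beta=0$, $p=1$, $\gamma=3/4$, equation (\ref{uz2}) gives $\mathcal{U}(z)=(2-1/\gamma)z^{-2}>0$ for all $z>0$, while $h(x)=\gamma\alpha(\alpha x)^{2\gamma-1}/\bigl(1+(\alpha x)^{\gamma}\bigr)$ vanishes at both $0$ and $\infty$ and hence is not increasing; so the inference ``$\mathcal{U}>0$ on $(0,\infty)$ implies IFR'' used in the paper does not hold in general, and your direct argument is the safer one (the theorem's conclusions are, of course, confirmed by your proof). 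Second, your maximization of $g(z)=p^{2}z/\bigl((1-p+pz)(1+pz)\bigr)$ is sharp and shows that for $\beta=0$ the HF is decreasing on the larger region $2\gamma\leq 1+\sqrt{1-p}$, with $\gamma=1/2$ emerging as the worst case $p=1$; for the theorem as stated, the cruder bound $g(z)\leq p/(1+pz)\leq 1<(1-\gamma)/\gamma$ would already suffice, so the AM--GM step is a bonus rather than a necessity. What the paper's density-based route would offer, were the change of variables handled correctly, is access to Glaser's finer bathtub/upside-down-bathtub classification, which a pure monotonicity computation does not provide.
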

	\begin{proof}
		(i)	Let $\mathcal{U}(z)=-\frac{\partial^2 \ln(\eta(z)) }{\partial z^2}$, where $\eta(z)$ is the same as in Theorem \ref{Thm1}. Now, using equation (\ref{Pderivative2}), we get
		\begin{equation}\label{uz1}
			\mathcal{U}(z)=\frac{\beta z^{\frac{1}{\gamma}-2} \left\{\gamma\left( 2 \beta z^{\frac{1}{\gamma}} +\alpha^2\right)-\alpha^2\right\}}{\gamma^2\left( 2 \beta z^{\frac{1}{\gamma}} +\alpha^2\right)^2}+\left(1-\frac{1}{\gamma}\right) \frac{1}{z^2}+\frac{p^2}{(1-p+pz)^2}.
		\end{equation}
		From equation (\ref{uz1}), $\mathcal{U}(z)>0$ if $\gamma \geq 1$. So, it follows from Glaser (1980) result that the HF is an increasing function if $\gamma \geq 1$. \\
		(ii) If $\beta=0$, then the equation (\ref{uz1}) reduces as
		\begin{equation}\label{uz2}
			\mathcal{U}(z;\beta=0)=\left(1-\frac{1}{\gamma}\right) \frac{1}{z^2}+\frac{p^2}{(1-p+pz)^2}.
		\end{equation} 	  
		It can be verify from equation (\ref{uz2}) that $\mathcal{U}(z, \beta=0)<0$ if $\gamma <0.5$. Thus, the result follows from the result of Glaser (1980).
	\end{proof}

	As a consequence of Theorem \ref{thm2} (i), the RTGLE distribution belongs in several well known ageing classes. Some of the implications among these ageing classes are given below: 
	\begin{equation*}
		\begin{array}{ccccccccc}
			\mathrm{IFR}& \Rightarrow &\mathrm{IFRA}& \Rightarrow&\mathrm{NBU}& \Rightarrow& \mathrm{NBUFR}& \Rightarrow& \mathrm{NBUFRA} \\
			 &&&&\Downarrow&&&& \\
			\Big\Downarrow &&&&\mathrm{NBUC}&&&& \\
			 &&&&\Downarrow&&&& \\
			\mathrm{DMRL} &&\Rightarrow&& \mathrm{NBUE} &\Rightarrow& \mathrm{HNBUE}& \Rightarrow & \mathcal{L}
		\end{array}	
	\end{equation*}
	Here, IFRA, NBU, NBUFR, NBUFRA, NBUC, DMRL, NBUE, HNBUE, and $\mathcal{L}$ denote increasing failure rate average, new better than used, new better than used in failure rate, new better than used in failure rate average, new better than used in convex ordering, decreasing mean residual life, new better than used in expectation, harmonically new better than used in expectation, and Laplace class, respectively. Similar implications exist for DFR class of distributions. For more details, the readers are advised to see Deshpande et al. (1986), Kochar and Wiens (1987), Lai and Xie (2006). Thus, the RTGLE distribution may be used for modelling the lifetime data having different ageing properties.  
	\begin{figure}[H]
		\caption{Graph of HF}
		\centering
		\label{hf}
		\includegraphics[width=1\linewidth, height=9cm]{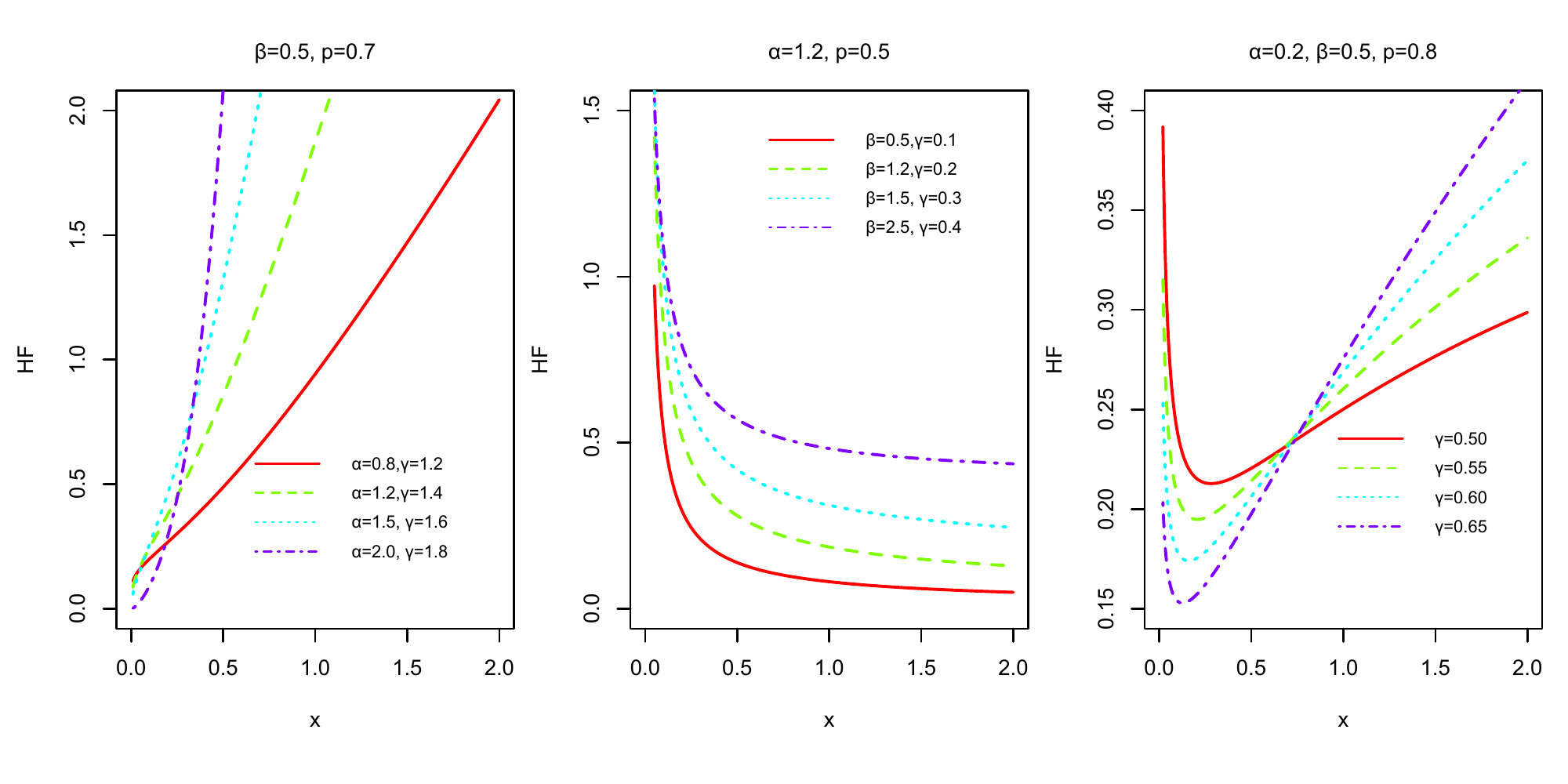} 
	\end{figure}      
	\subsection{Quantile Function and Its Applications}
	The good news is that the RTGLE distribution has a closed-form expression for quantile function (QF). The closed-form of the QF can be derived with the help of the Lambert $W$ function. The Lambert $W$ function is defined as the solution of the equation 
	\begin{equation}\label{W-function}
		W(\mathcal{V}) e^{W(\mathcal{V})}=\mathcal{V},
	\end{equation}
	where $\mathcal{V}$ is a complex number. The equation (\ref{W-function}) has an infinite number of solutions and so is a multi-valued function. The different possible solutions are called the branches. For real number $\mathcal{V} \geq -1/e$, the Lambert $W$ function has only two branches, which are denoted by $W_0$ (principal branch) and $W_{-1}$ (negative branch). For $\mathcal{V} \geq -1/e$, the principal branch $W_0$ takes values in $[-1, \infty)$, and for $\mathcal{V} \in [-1/e,0)$, the negative branch $W_{-1}$ takes values in $(-\infty,-1]$.  A large number of applications of the Lambert $W$ function are available in the literature (see Jodra and Arshad (2021)). For a detailed discussion on it, we refer to the article by Corless et al. (1996). Now, we state the QF of the RTGLE distribution. 
	\begin{theorem}
		Let $X$ be a random variable having RTGLE$(\alpha,\beta,\gamma,p)$ distribution. The quantile function $Q(u)$ of $X$ is given by 
		\begin{equation}\label{Quantile}
			Q(u)=\frac{-\alpha+\sqrt{\alpha^{2}+2 \beta C_{p,u,\gamma}}}{\beta}, \; 0<u<1,
		\end{equation}
		where
		\begin{equation}\label{Cfunction}
			C_{p,u,\gamma}=\left(-\frac{1}{p}\left[1+p W_{-1}\left(\frac{u-1}{p e^{\frac{1}{p}} }\right)\right]\right)^{\frac{1}{\gamma}}
		\end{equation}
		and $W_{-1}(\cdot)$ denotes the negative branch of the Lambert $W$ function. 
	\end{theorem}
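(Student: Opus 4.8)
The plan is to invert the distribution function $F$ in equation (\ref{eq2.3}) directly. Writing $u = F(x)$ and abbreviating $z = \left(\alpha x + \frac{\beta x^2}{2}\right)^{\gamma}$, the defining relation $F(x) = u$ collapses to the single transcendental equation $(1+pz)e^{-z} = 1-u$ in the unknown $z$. Since $x > 0$ forces $z > 0$ (as already observed in the proof of Theorem \ref{Thm1}), the whole problem reduces to solving this scalar equation for its positive root $z$ and then undoing the two algebraic substitutions, namely $z \mapsto \left(\alpha x + \frac{\beta x^2}{2}\right)^{\gamma}$ and the quadratic in $x$. Throughout I take $p \in (0,1]$, since the formula (\ref{Quantile}) involves $1/p$ and the negative Lambert branch.

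First I would cast $(1+pz)e^{-z} = 1-u$ into the canonical form $W e^{W} = \mathcal{V}$ of (\ref{W-function}). Substituting $s = -\tfrac{1}{p}(1+pz)$, equivalently $1+pz = -ps$ and $-z = s + \tfrac{1}{p}$, and multiplying by $e^{1/p}$, the equation becomes $s\,e^{s} = \frac{u-1}{p\,e^{1/p}}$, so that $s = W\!\left(\frac{u-1}{p\,e^{1/p}}\right)$. Unwinding the substitution gives $z = -\tfrac{1}{p}\bigl[1 + p s\bigr]$, whence $\alpha x + \frac{\beta x^2}{2} = z^{1/\gamma} = C_{p,u,\gamma}$ once the correct branch is inserted; solving the quadratic $\frac{\beta}{2}x^2 + \alpha x - C_{p,u,\gamma} = 0$ for its unique positive root then yields exactly (\ref{Quantile}).

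The crux of the argument, and the step I expect to require the most care, is the branch selection, i.e.\ justifying that $W_{-1}$ rather than $W_{0}$ is the correct inverse. Two things must be verified. First, the argument $\mathcal{V} = \frac{u-1}{p\,e^{1/p}}$ must lie in $[-1/e,0)$ so that a real branch exists; since $u \in (0,1)$ gives $\mathcal{V} < 0$, this reduces to the inequality $(1-u) \le p\,e^{1/p - 1}$, which I would establish by showing that $p \mapsto p\,e^{1/p-1}$ is decreasing on $(0,1]$ with minimum value $1$ attained at $p=1$, so that $p\,e^{1/p-1} \ge 1 > 1-u$. Second, the positivity constraint $z > 0$ pins down the branch: because $z > 0$ and $p \in (0,1]$ force $1 + pz > 1$, the substituted variable satisfies $s = -\tfrac{1}{p}(1+pz) < -\tfrac{1}{p} \le -1$, so $s$ falls in the range $(-\infty,-1]$ of the negative branch, while $W_{0}$ would return a value in $[-1,0)$ and hence $z \le 0$, which is inadmissible. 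This fixes $s = W_{-1}\!\left(\frac{u-1}{p\,e^{1/p}}\right)$ and completes the identification with (\ref{Cfunction}).
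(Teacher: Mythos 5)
Your proof is correct and takes essentially the same route as the paper's: invert $F(x)=u$, recast the equation $(1+pz)e^{-z}=1-u$ in the canonical Lambert form via the substitution $s=-\tfrac{1}{p}(1+pz)$, select the negative branch, and finish by solving the quadratic $\tfrac{\beta}{2}x^{2}+\alpha x-C_{p,u,\gamma}=0$ for its positive root. The only difference is that you explicitly carry out the two verifications (that $\tfrac{u-1}{p\,e^{1/p}}\in[-1/e,0)$, via monotonicity of $p\mapsto p\,e^{1/p-1}$, and that $z>0$ rules out $W_{0}$) which the paper dismisses with ``it can be verified.''
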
 
	\begin{proof}
		For $u \in (0, 1)$, we need to solve the equation $F(x)=u,$ where $F(\cdot)$ is the DF given in equation (\ref{eq2.3}). This is  equivalent to solving the following equation
		\begin{align*}
			\left[1+p\left(\alpha x+\frac{\beta x^{2}}{2}\right)^{\gamma}\right]e^{-\left(\alpha x+\frac{\beta x^{2}}{2}\right)^{\gamma}}&=1-u, \\
			\text{or} \hspace{2cm}\;\; 
			\left[-\frac{1}{p}-\left(\alpha x+\frac{\beta x^{2}}{2}\right)^{\gamma}\right] e^{-\left(\alpha x+\frac{\beta x^{2}}{2}\right)^{\gamma}-\frac{1}{p}}&=\frac{u-1}{pe^{\frac{1}{p}}}. 
		\end{align*}
		It can be verified that $(u-1)/pe^{\frac{1}{p}} \in [-1/e,0)$, and $-\frac{1}{p}-\left(\alpha x+\frac{\beta x^{2}}{2}\right)^{\gamma} \in (-\infty,-1]$. Now, using the negative branch of the Lambert $W$ function in the above equation, we get
		\begin{equation}\label{QFLWF}
			W_{-1}\left(\frac{u-1}{pe^{\frac{1}{p}}}\right)=-\frac{1}{p}-\left(\alpha x+\frac{\beta x^{2}}{2}\right)^{\gamma}.
		\end{equation}
		On solving the equation (\ref{QFLWF}), and using the function $C_{p,u,\gamma}$ given in equation (\ref{Cfunction}), we have
		\begin{equation}\label{Quareq}
			\frac{\beta}{2} x^2+\alpha x- C_{p,u,\gamma}=0.
		\end{equation}
		Finally, the QF, given in equation (\ref{Quantile}), is obtained by solving the quadratic equation (\ref{Quareq}) with respect to $x$.  	
	\end{proof}
	It is worth mentioning that the Lambert $W$ function is available in several computing platforms such as $R$-software, Matlab, Mathematica, and Maple, etc. Therefore, random sample generation from the RTGLE distribution is straightforward from equations (\ref{Quantile}) and (\ref{Cfunction}). The QF can be used as an alternative to the DF. It has several interesting properties that are not shared by the DF, e.g., the sum of two QFs is again a QF. Quantile-based data analysis is a burn-in topic in statistical literature. Specially, in reliability engineering and life testing experiments, a lot of measures have been developed using the QF. Some of the important measures based on QF are listed below:  
\begin{itemize}
	\item A measure of location is the median defined by $Q(0.5)$;
	\item Three quartiles $Q_1=Q(0.25), Q_2=Q(0.5)$ and $Q_3=Q(0.75)$ are useful to describe the summary of distribution and making the Box Plot to detect the outliers;   
	\item Dispersion is measured by the interquartile range $IQR=Q_3 -Q_1$;
	\item Skewness is measured by Galton's coefficient $GC=(Q_1+Q_3-2Q_2)/(Q_3-Q_1)$, which lies between $-1$ and $+1$. For symmetric distribution, $GC=0$;
	\item Moors coefficient of kurtosis is defined by 
	$$MC=\frac{Q(0.875)-Q(0.625)+Q(0.375)-Q(0.125)}{Q_3 -Q_1}.$$  
\end{itemize}
Given the QF (in equation (\ref{Quantile})), the calculations of all the above measures are very simple. We have computed these measures by using the $R$-software (version 4.0.0; R-core team 2021), and reported in Table \ref{tableQF}. Apart from these measures, there are several other quantile-based measures studied in the literature, e.g., Gini's mean difference defined as $\Delta=2 \int_{0}^{1} (2u-1) Q(u) du$; the $r$th $L$-moment is defined as 
\begin{equation}
	L_r=\sum_{k=0}^{r-1} (-1)^{r-1-k} {r-1 \choose k} {r-1+k \choose k} \int_{0}^{1} u^k Q(u) \ du. \nonumber 
\end{equation}
The applications of $L$-moments in reliability theory suggest that we can use these as a competing alternative to conventional moments. The $L$-moments are able to provide a summary statistics of distribution, to identify the distribution and to fit models to data. For further reading, the reader can see an excellent book by Nair et al. (2013).	

\begin{table}[H]
	\centering
	\caption{Some measures based on quantile function}
	\scalebox{0.85}{
		\begin{tabular}{|ccc|c|cccc|}
			\hline
			$\beta$    & $\gamma$    & $p$     & $\alpha$   & Median & IQR   & Skewness & Kurtosis \\
			\hline
			&       &       & 0.5   & 1.1199 & 1.0325 & 0.0728 & 1.0866 \\
			&       &       & 1.0     & 0.7375 & 0.7996 & 0.1211 & 1.0959 \\
			&       &       & 1.5   & 0.5347 & 0.6220 & 0.1494 & 1.1149 \\
			0.5   & 1.2   & 0.2   & 2.0     & 0.4152 & 0.4999 & 0.1646 & 1.1294 \\
			&       &       & 2.5   & 0.3380 & 0.4145 & 0.1731 & 1.1391\\
			&       &       & 3.0     & 0.2844 & 0.3527 & 0.1783 & 1.1455 \\
			&       &       & 3.5   & 0.2453 & 0.3063 & 0.1817 & 1.1499 \\
			\hline
			$\alpha$    & $\gamma$    & $p$     & $\beta$   & Median & IQR   & Skewness & Kurtosis \\
			\hline
			&       &       & 0.5   & 1.1199 & 1.0325 & 0.0728 & 1.0866 \\
			&       &       & 1.0     & 0.9131 & 0.7760 & 0.0571 & 1.0908\\
			&       &       & 1.5   & 0.7962 & 0.6479 & 0.0509 & 1.0937 \\
			0.5   & 1.2     & 0.2   & 2.0     & 0.7175 & 0.5677 & 0.0475 & 1.0956\\
			&       &       & 2.5   & 0.6595 & 0.5114 & 0.0454 & 1.0969\\
			&       &       & 3.0     & 0.6144 & 0.4691 & 0.0439 & 1.0978  \\
			&       &       & 3.5   & 0.5779 & 0.4358 & 0.0429 & 1.0985\\
			\hline
			$\alpha$    & $\beta$    & $p$     & $\gamma$   & Median & IQR   & Skewness & Kurtosis \\ 
			\hline 
			&       &       & 0.5   & 0.9726 & 2.2683 & 0.3437 & 1.2406 \\
			&       &       & 1.0     & 1.0978 & 1.2174 & 0.1127 & 1.0900 \\
			&       &       & 1.5   & 1.1424 & 0.8419 & 0.0330 & 1.0897 \\
			0.5   & 0.5   & 0.2   & 2.0     & 1.1652 & 0.6446 & -0.0066 & 1.0992 \\
			&       &       & 2.5   & 1.1791 & 0.5225 & -0.0302 & 1.1080\\
			&       &       & 3.0     & 1.1885 & 0.4394 & -0.0459 & 1.1151 \\
			&       &       & 3.5   & 1.1952 & 0.3791 & -0.0571 & 1.1208 \\
			\hline
			$\alpha$    & $\beta$    & $\gamma$     & $p$   & Median & IQR   & Skewness & Kurtosis \\
			\hline
			&       &       & 0.1   & 1.0511 & 0.9928 & 0.0813 & 1.0900 \\
			&       &       & 0.2   & 1.1199 & 1.0325 & 0.0728 & 1.0866  \\
			&       &       & 0.3   & 1.1922 & 1.0621 & 0.0617 & 1.0847 \\
			0.5   & 0.5   & 1.2   & 0.5   & 1.3413 & 1.0851 & 0.0390 & 1.0924 \\
			&       &       & 0.7   & 1.4856 & 1.0612 & 0.0281 & 1.1104\\
			&       &       & 0.9   & 1.6163 & 1.0074 & 0.0332 & 1.1190\\
			&       &       & 1     & 1.6755 & 0.9761 & 0.0399 & 1.1173\\
			\hline
		\end{tabular}
	}
	\label{tableQF}
\end{table}
	\subsection{Moments and Its Generating Function}
	This section provides the formulas for $r^{th}$ moment, and the moments generating function (MGF) of RTGLE distribution. Let $X$ be a random variable having the RTGLE$(\alpha,\beta,\gamma,p)$ distribution, then $r^{th}$ moment about origin is defined as $\mu_{r}^{'}=E(X^{r})=\int_{0}^{\infty}x^r f(x)dx$, where $f(x)$ denotes the PDF given in equation (\ref{PDF1}). Now, set $z=\left(\alpha x+\frac{\beta x^{2}}{2}\right)^{\gamma}$, which implies that $x=\frac{1}{\beta}\left(\left(2 \beta z^{\frac{1}{\gamma}}+\alpha^{2}\right)^{\frac{1}{2}}-\alpha\right)$, and $dz=\gamma(\alpha+\beta x)\left(\alpha x+\frac{\beta x^{2}}{2}\right)^{\gamma-1}dx$. Using these values, we get
	\begin{align}
		\mu_{r}^{\prime}&=\frac{1}{\beta^r}\int_{0}^{\infty}\left(\left(2 \beta z^{\frac{1}{\gamma}}+\alpha^{2}\right)^{\frac{1}{2}}-\alpha\right) ^r(1-p+pz)e^{-z}dz\nonumber\\
		&=\frac{1}{\beta^r}\int_{0}^{\infty}\sum_{i=0}^{r} (-1)^{i}\binom{r}{i}
		\alpha^{i}\left(2 \beta z^{\frac{1}{\gamma}}+\alpha^{2}\right)^{\frac{r-i}{2}}(1-p+pz)e^{-z}dz\quad(\text{using binomial expansion})\nonumber\\
		&=\frac{1}{\beta^r}\int_{0}^{\infty}\sum_{i=0}^{r} (-1)^{i}\binom{r}{i}
		\alpha^{i}\left(2 \beta z^{\frac{1}{\gamma}} \right)^{\frac{r-i}{2}} \left(1+\frac{\alpha^{2}}{2 \beta z^{\frac{1}{\gamma}}}\right)^{\frac{r-i}{2}}(1-p+pz)e^{-z}dz\nonumber
	\end{align}
	Letting $\tau=\frac{2\beta}{\alpha^2}$, one can verify that $\tau z^{1 / \gamma}<1$ if $0<z<\tau^{-\gamma}$, and $\left(\tau z^{1 / \gamma}\right)^{-1}<1$ if $\tau^{-\gamma}<z<\infty$. So, we have
	\begin{align*}
		\mu_{r}^{\prime}&= \sum_{i=0}^{r}\binom{r}{i} \frac{(-\alpha)^{i}}{\beta^{r}} \int_{0}^{\tau^{-\gamma}}\left(1+\tau z^{1 / \gamma}\right)^{\frac{r-i}{2}}\alpha^{r-i} (1-p+pz)e^{-z}dz \\
		&\quad+\sum_{i=0}^{r}\frac{(-\alpha)^{i}}{\beta^{r}} \int_{\tau^{-\gamma}}^{\infty}\left(1+\frac{1}{\tau z^{1 / \gamma}}\right)^{\frac{r-i}{2}}\left(2 \beta z^{1 / \gamma}\right)^{\frac{r-i}{2}} (1-p+pz)e^{-z}dz\\
		&=\sum_{i=0}^{r}\sum_{j=0}^{\infty}(-1)^{i}\binom{r}{i}\binom{\frac{r-i}{2}}{j} \alpha^{r-2j}2^j\beta^{j-r}  \int_{0}^{\tau^{-\gamma}}(1-p+pz) z^{\frac{j}{\gamma}}e^{-z}dz\\
		&\quad+\sum_{i=0}^{r}\sum_{j=0}^{\infty}(-1)^{i}\binom{r}{i}\binom{\frac{r-i}{2}}{j} \alpha^{r-2j}2^j\beta^{j-r}\int_{\tau^{-\gamma}}^{\infty}(1-p+pz) z^{\frac{j}{\gamma}}e^{-z}dz\\
		&=\sum_{i=0}^{r}\sum_{j=0}^{\infty}(-1)^{i}\binom{r}{i}\binom{\frac{r-i}{2}}{j} \alpha^{r-2j}2^j\beta^{j-r}  \int_{0}^{\infty}(1-p+pz) z^{\frac{j}{\gamma}}e^{-z}dz\\
		&=\sum_{i=0}^{r}\sum_{j=0}^{\infty}(-1)^{i}\binom{r}{i}\binom{\frac{r-i}{2}}{j} \alpha^{r-2j}2^j\beta^{j-r}\left\lbrace(1-p)\Gamma\left(\frac{j}{\gamma}+1\right)+p\Gamma\left(\frac{j}{\gamma}+2\right) \right\rbrace,\numberthis\label{mean}   
	\end{align*}
	where $\Gamma(a)=\int_{0}^{\infty}t^{a-1}e^{-t}dt$ is gamma function. The numerical summary based on moments is reported in Table \ref{tableMVSK}. It can be observed from Table \ref{tableMVSK} that RTGLE distribution is useful to model the positive as well as negative skewed data. Some $3$D-plots of skewness and kurtosis are also reported in Figure \ref{FigSK}. These graphs show the effects of parameters on the shape of the distribution. Now, we will provide a recurrence relation among the moments, which is useful for computation purpose. 
	\begin{theorem}
		Let $X \sim$ RTGLE $(\alpha,\beta,\gamma,p)$. Then, we have
		\begin{equation}\label{reccurence1}
			\sum_{i=0}^{r}\binom{r}{i}\alpha^i\left(\frac{\beta}{2}\right)^{r-i} \mu_{2 r-i}^{\prime}=\left(1+\frac{pr}{\gamma}\right)\Gamma\left(\frac{r}{\gamma}+1\right),
		\end{equation}
		where $\Gamma(a)=\int_{0}^{\infty}t^{a-1}e^{-t}dt$ denotes the usual gamma function. 
	\end{theorem}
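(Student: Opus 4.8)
The plan is to recognize that the awkward left-hand side is, after interchanging the finite sum with the integral defining each moment, nothing more than the expectation of a single clean power of the baseline integrand. Writing $\mu'_{2r-i} = \int_0^\infty x^{2r-i} f(x)\,dx$ and pulling the finite sum inside the integral, the left-hand side of \eqref{reccurence1} becomes $\int_0^\infty \left[\sum_{i=0}^r \binom{r}{i}\alpha^i (\beta/2)^{r-i} x^{2r-i}\right] f(x)\,dx$. The first key step is to observe that the bracketed sum is exactly a binomial expansion: since the power of $x$ in the $i$th term is $i + 2(r-i) = 2r-i$, the binomial theorem gives $\sum_{i=0}^r \binom{r}{i}(\alpha x)^i (\beta x^2/2)^{r-i} = \left(\alpha x + \frac{\beta x^2}{2}\right)^r$. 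Thus the left-hand side collapses to the single integral $\int_0^\infty \left(\alpha x + \frac{\beta x^2}{2}\right)^r f(x)\,dx$.

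Second, I would apply the same substitution $z = \left(\alpha x + \frac{\beta x^2}{2}\right)^\gamma$ already used in deriving $\mu'_r$ in \eqref{mean}. Under this change of variable one has $dz = \gamma(\alpha+\beta x)\left(\alpha x + \frac{\beta x^2}{2}\right)^{\gamma-1}\,dx$, so that $f(x)\,dx = (1-p+pz)e^{-z}\,dz$, while the remaining factor becomes $\left(\alpha x + \frac{\beta x^2}{2}\right)^r = z^{r/\gamma}$. The integral therefore reduces to $\int_0^\infty z^{r/\gamma}(1-p+pz)e^{-z}\,dz$, and since $x$ ranges over $(0,\infty)$ exactly when $z$ does, the limits are preserved.

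Finally, splitting this into two standard gamma integrals yields $(1-p)\Gamma\!\left(\frac{r}{\gamma}+1\right) + p\,\Gamma\!\left(\frac{r}{\gamma}+2\right)$. Using the functional equation $\Gamma(a+1)=a\Gamma(a)$ to write $\Gamma\!\left(\frac{r}{\gamma}+2\right) = \left(\frac{r}{\gamma}+1\right)\Gamma\!\left(\frac{r}{\gamma}+1\right)$ and factoring out $\Gamma\!\left(\frac{r}{\gamma}+1\right)$ leaves the bracketed scalar $(1-p)+p\left(\frac{r}{\gamma}+1\right) = 1 + \frac{pr}{\gamma}$, which is precisely the claimed right-hand side of \eqref{reccurence1}.

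I expect no serious obstacle here: the entire argument rests on the single algebraic observation that the binomial sum reconstitutes $\left(\alpha x + \frac{\beta x^2}{2}\right)^r$, after which the computation rides entirely on the already-established substitution and the gamma recurrence. The only point requiring mild care is justifying the interchange of the finite sum with the integral, which is immediate since each $\mu'_{2r-i}$ is a finite, absolutely convergent integral.
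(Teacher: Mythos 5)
Your proposal is correct and is essentially the paper's own proof run in the opposite direction: the paper starts from $E\bigl[(\alpha X+\tfrac{\beta X^2}{2})^r\bigr]$, evaluates it via the substitution $z=(\alpha x+\tfrac{\beta x^2}{2})^\gamma$ and the gamma recurrence, and then expands binomially to obtain the moment sum, whereas you collapse the moment sum binomially first and then perform the identical substitution and gamma computation. The two arguments use the same key identities throughout, so there is nothing substantive to distinguish them.
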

	\begin{proof}
	Consider 
		\begin{align*}
			E&\left[\left(\alpha X+\frac{\beta X^{2}}{2}\right)^{r}\right]\\ 
			&=\int_{0}^{\infty} \gamma(\alpha+\beta x)\left(\alpha x+\frac{\beta x^{2}}{2}\right)^{\gamma+r-1}
			\left[1-p+p\left(\alpha x+\frac{\beta x^{2}}{2}\right)^{\gamma}\right]
			e^{-\left(\alpha x+\frac{\beta x^{2}}{2}\right)^{\gamma}}
			\; dx.
		\end{align*}
	Using the transformation $z=\left(\alpha x+\frac{\beta x^{2}}{2}\right)^{\gamma}$, which implies that $dz=\gamma(\alpha+\beta x)\left(\alpha x+\frac{\beta x^{2}}{2}\right)^{\gamma-1}dx$. Therefore
		\begin{align*}
			E\left[\left(\alpha X+\frac{\beta X^{2}}{2}\right)^{r}\right] &=\int_{0}^{\infty} z^{\frac{r}{\gamma}} \left(1-p+p z\right)e^{-z}dz \\
			&=(1-p)\Gamma\left(\frac{r}{\gamma}+1\right)+p\Gamma\left(\frac{r}{\gamma}+2\right)\\
			&=\left(\frac{pr}{\gamma}+1\right)\Gamma\left(\frac{r}{\gamma}+1\right).\numberthis \label{3.11}
		\end{align*}
		Now expanding the left hand side of \eqref{3.11} binomially, we get
		\begin{align*}
			E\left[\sum_{i=0}^{r}\binom{r}{i}\alpha^i\left(\frac{\beta}{2}\right)^{r-i} X^{2r-i}\right]&=\left(\frac{pr}{\gamma}+1\right)\Gamma\left(\frac{r}{\gamma}+1\right) \\
			\sum_{i=0}^{r}\binom{r}{i}\alpha^i\left(\frac{\beta}{2}\right)^{r-i} \mu_{2r-i}^{\prime}&=\left(\frac{pr}{\gamma}+1\right)\Gamma\left(\frac{r}{\gamma}+1\right),
		\end{align*}
		which proves the result.
	\end{proof}

\begin{remark}
		For variance, we first obtain $\mu_{2}^{\prime}$ as follows. On substituting $r=1$ in \eqref{reccurence1}, we have
		\begin{align*}
			\frac{\beta}{2} \mu_{2}^{\prime}+\alpha \mu_{1}^{\prime}&=\left(\frac{p}{\gamma}+1\right)\Gamma\left(\frac{1}{\gamma}+1\right) \\ \\
			\mu_{2}^{\prime}&=\frac{2}{\beta}\left[\left(\frac{p}{\gamma}+1\right)\Gamma\left(\frac{1}{\gamma}+1\right)-\alpha \mu_{1}^{\prime}\right]
		\end{align*}
		Hence, the variance of $X$ is given by
		\begin{align*}
			\operatorname{Var}(X) &=\mu_{2}^{\prime}-\left(\mu_{1}^{\prime}\right)^2 \\
			&=\frac{2}{\beta}\left[\left(\frac{p}{\gamma}+1\right)\Gamma\left(\frac{1}{\gamma}+1\right)-\alpha \mu_{1}^{\prime}\right]-\left(\mu_{1}^{\prime}\right)^2,
		\end{align*}
		where $\mu_{1}^{\prime}=E(X)$, can be obtain by putting $r=1$ in equation \eqref{mean}.	
	\end{remark}
\begin{table}[H]
	\caption{Numerical values of mean, variance, skewness and kurtosis}
	\centering
		\scalebox{0.8}{
	\begin{tabular}{|ccc|c|ccccccc|}
		\hline
		$\beta$ & $\gamma$ & $p$ & $\alpha$ & $E(X)$ & $E(X^2)$ &$E(X^3)$ & $E(X^4)$ & $V(X)$ & $\gamma_1$ & $\beta_2$ \\ 
		\hline
		&  &  & 0.50 & 1.2058 & 1.9782 & 3.8832 & 8.6523 & 0.5243 & 0.6155 & 3.0496 \\ 
		&  &  & 1.00 & 0.8389 & 1.0343 & 1.5885 & 2.8406 & 0.3306 & 0.8738 & 3.5859 \\ 
		&  &  & 1.50 & 0.6300 & 0.6095 & 0.7541 & 1.1063 & 0.2126 & 1.0430 & 4.0914  \\ 
		0.50  &1.20  &0.20  & 2.00 & 0.4996 & 0.3932 & 0.4025 & 0.4950 & 0.1436 & 1.1518 & 4.4849 \\ 
		& &  & 2.50 & 0.4118 & 0.2714 & 0.2354 & 0.2474 & 0.1018 & 1.2233 & 4.7747 \\ 
		&  &  & 3.00 & 0.3494 & 0.1973 & 0.1478 & 0.1350 & 0.0753 & 1.2717 & 4.9856 \\ 
		&  & & 3.50 & 0.3029 & 0.1493 & 0.0982 & 0.0790 & 0.0576 & 1.3053 & 5.1403 \\ 
		\hline
		$\alpha$ & $\gamma$ & $p$ & $\beta$ & $E(X)$ & $E(X^2)$ &$E(X^3)$ & $E(X^4)$ & $V(X)$ & $\gamma_1$ & $\beta_2$ \\ 
		\hline
		& &  & 0.50 & 1.2058 & 1.9782 & 3.8832 & 8.6523 & 0.5243 & 0.6155 & 3.0496  \\ 
		&  &  & 1.00 & 0.9659 & 1.2290 & 1.8431 & 3.1093 & 0.2960 & 0.5232 & 2.9282 \\ 
		&  & & 1.50 & 0.8360 & 0.9059 & 1.1488 & 1.6320 & 0.2070 & 0.4817 & 2.8871  \\ 
		0.50 & 1.20 & 0.20 & 2.00 & 0.7503 & 0.7223 & 0.8102 & 1.0153 & 0.1594 & 0.4575 & 2.8672 \\ 
		&  &  & 2.50 & 0.6879 & 0.6028 & 0.6136 & 0.6966 & 0.1296 & 0.4414 & 2.8558 \\ 
		&  &  & 3.00 & 0.6396 & 0.5184 & 0.4869 & 0.5094 & 0.1093 & 0.4299 & 2.8486 \\ 
		&  &  & 3.50 & 0.6008 & 0.4554 & 0.3994 & 0.3897 & 0.0945 & 0.4212 & 2.8437 \\
		\hline
		$\alpha$ & $\beta$ & $p$ & $\gamma$ & $E(X)$ & $E(X^2)$ &$E(X^3)$ & $E(X^4)$ & $V(X)$ & $\gamma_1$ & $\beta_2$ \\ 
		\hline 
		&  &  & 0.50 & 1.7536 & 7.6929 & 51.3577 & 454.9978 & 4.6179 & 2.1839 & 9.7691 \\ 
		& &  & 1.00 & 1.2458 & 2.3083 & 5.3222 & 14.2781 & 0.7562 & 0.8550 & 3.5418 \\ 
		&  &  & 1.50 & 1.1789 & 1.7346 & 2.9302 & 5.4712 & 0.3447 & 0.3574 & 2.7217 \\ 
		0.50 & 0.50 & 0.20 & 2.00 & 1.1664 & 1.5666 & 2.3147 & 3.6749 & 0.2061 & 0.0711 & 2.6049 \\ 
		& &  & 2.50 & 1.1665 & 1.5000 & 2.0686 & 3.0119 & 0.1394 & -0.1205 & 2.6759 \\ 
		&  & & 3.00 & 1.1700 & 1.4701 & 1.9486 & 2.6951 & 0.1013 & -0.2592 & 2.8049 \\ 
		&  && 3.50 & 1.1743 & 1.4561 & 1.8833 & 2.5208 & 0.0772 & -0.3649 & 2.9486 \\ 
		\hline
		$\alpha$ & $\beta$ & $\gamma$ & $p$ & $E(X)$ & $E(X^2)$ &$E(X^3)$ & $E(X^4)$ & $V(X)$ & $\gamma_1$ & $\beta_2$ \\ 
		\hline 
		&  &  & 0.00 & 1.0771 & 1.6084 & 2.9069 & 6.0121 & 0.4483 & 0.6954 & 3.2167 \\ 
		&  &  & 0.20 & 1.2058 & 1.9782 & 3.8832 & 8.6523 & 0.5243 & 0.6155 & 3.0496 \\ 
		&  &  & 0.30 & 1.2701 & 2.1630 & 4.3713 & 9.9723 & 0.5498 & 0.5575 & 2.9545 \\ 
		0.50 & 0.50 & 1.20 & 0.50 & 1.3988 & 2.5328 & 5.3476 & 12.6125 & 0.5761 & 0.4412 & 2.8337 \\ 
		&  &  & 0.70 & 1.5275 & 2.9025 & 6.3239 & 15.2527 & 0.5693 & 0.3517 & 2.8243 \\ 
		&  &  & 0.90 & 1.6562 & 3.2723 & 7.3003 & 17.8929 & 0.5294 & 0.3305 & 2.9035\\ 
		&  & & 1.00 & 1.7205 & 3.4571 & 7.7884 & 19.2130 & 0.4970 & 0.3714 & 2.9450 \\ 
		\hline
	\end{tabular}
}\label{tableMVSK}
\end{table}
	\noindent We know that the MGF of a random variable can be written as $M_{X}(t)=\sum_{r=0}^{\infty}E(X^r)\frac{t^r}{r!}$. Using this formula, the following theorem provides the MGF of RTGLE distribution. 
	\begin{theorem}
		Let $X \sim$ RTGLE $(\alpha,\beta,\gamma,p)$. Then, the MGF of $X$ is given by 	
		\begin{align*}
			M_{X}(t)
			&=\sum_{r=0}^{\infty}\sum_{i=0}^{r}\sum_{j=0}^{\infty}(-1)^{i}\frac{t^r}{r!}\binom{r}{i}\binom{\frac{r-i}{2}}{j} \alpha^{r-2j}2^j\beta^{j-r}\left\lbrace(1-p)\Gamma\left(\frac{j}{\gamma}+1\right)+p\Gamma\left(\frac{j}{\gamma}+2\right) \right\rbrace. 
		\end{align*}
	\end{theorem}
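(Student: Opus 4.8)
The plan is to obtain the MGF directly from the series representation $M_{X}(t)=\sum_{r=0}^{\infty}E(X^{r})\,t^{r}/r!$ together with the explicit moment formula already derived in equation (\ref{mean}). Since the $r$th raw moment $\mu_{r}^{\prime}=E(X^{r})$ has been computed in closed form as a double sum over $i$ and $j$, essentially all the analytic work is already done, and the theorem amounts to inserting that expression into the power series and tidying the indices.

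Concretely, I would first recall from (\ref{mean}) that
\begin{equation*}
E(X^{r})=\sum_{i=0}^{r}\sum_{j=0}^{\infty}(-1)^{i}\binom{r}{i}\binom{\frac{r-i}{2}}{j}\alpha^{r-2j}2^{j}\beta^{j-r}\left\{(1-p)\Gamma\!\left(\tfrac{j}{\gamma}+1\right)+p\,\Gamma\!\left(\tfrac{j}{\gamma}+2\right)\right\}.
\end{equation*}
Substituting this into $M_{X}(t)=\sum_{r=0}^{\infty}E(X^{r})\,t^{r}/r!$ and carrying the factor $t^{r}/r!$ inside the finite inner sum over $i$ and the series over $j$ yields exactly the claimed triple-sum expression, with the outer index $r$ running over $\{0,1,2,\dots\}$. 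No new integrals need to be evaluated; the gamma-function terms are inherited verbatim from the moment calculation.

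The only genuine obstacle is rigor rather than computation: one must justify that the MGF exists (i.e.\ that the defining power series has a positive radius of convergence) and that the formal rearrangement producing the triple sum is legitimate. This is where the tail behaviour established earlier in Section \ref{sec4} is the key input. Because the upper tail of the PDF decays like $e^{-(\alpha x+\beta x^{2}/2)^{\gamma}}$, the distribution is sub-exponential (indeed sub-Gaussian-type when $\beta>0$, $\gamma\ge 1$), so $E(X^{r})$ grows slowly enough in $r$ that $\sum_{r}E(X^{r})t^{r}/r!$ converges for $t$ in a neighbourhood of the origin; on that neighbourhood Fubini--Tonelli (applied to the absolutely convergent terms, using $\lvert(-1)^{i}\rvert=1$ and positivity of the gamma factors) permits the interchange of the summations over $r$, $i$, and $j$. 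I would state this convergence caveat explicitly and then conclude that the displayed triple sum represents $M_{X}(t)$ wherever it converges, which completes the proof.
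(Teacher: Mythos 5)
Your proposal follows exactly the paper's own route: the authors obtain the theorem by substituting the $r$th moment formula from equation (\ref{mean}) into the series $M_{X}(t)=\sum_{r=0}^{\infty}E(X^{r})\,t^{r}/r!$, which is precisely your argument. Your added discussion of convergence and the Fubini--Tonelli justification for rearranging the sums is a point of rigor the paper omits entirely, but it does not change the substance of the approach.
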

	\begin{figure}[H]
		\caption{Graph of Skewness and Kurtosis}
		\centering
		
		\includegraphics[width=1\linewidth, height=7.5cm]{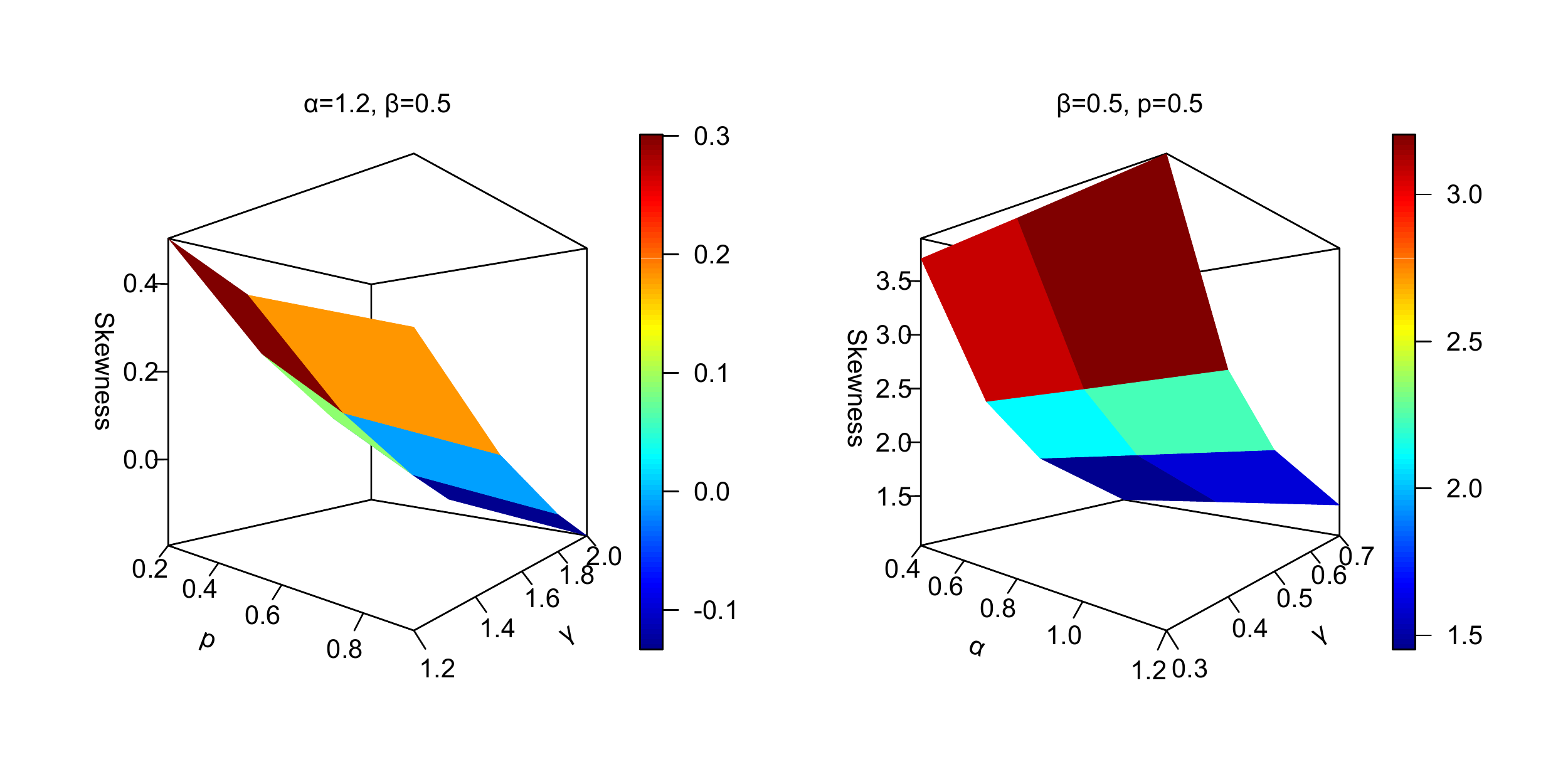} 
		\includegraphics[width=1\linewidth, height=7.5cm]{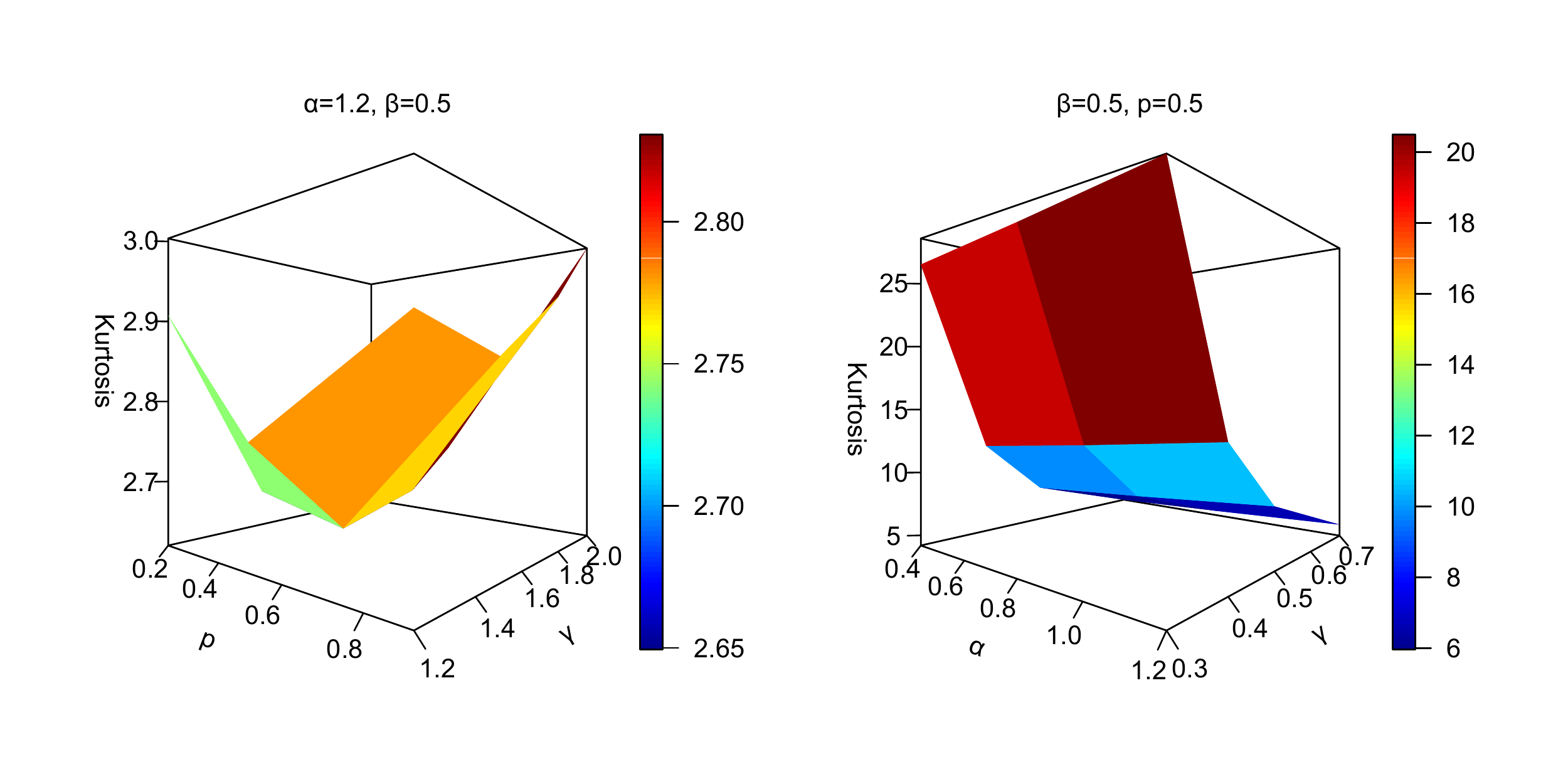} 
		\label{FigSK}
	\end{figure}  
	\subsection{Order and Record Statistics}
	This section provides the PDFs of ordered random variables when the random sample is drawn from RTGLE distribution. Order and record statistics are most commonly used models of ordered random variables and have applications in various fields of research. For more details, the readers are advised to see, Chandler (1952), Arnold et al. (1992, 1998), Ahsanullah (1995), David and Nagaraja. (2003), Khan and Arshad (2016), Arshad and Baklizi (2019), Arshad and Jamal (2019a, 2019b), Arshad et al. (2021), Azhad et al. (2021). Let $X_{1}, X_{2}, \ldots, X_{n}$ be a random sample taken from RTGLE distribution and let $X_{(1)} \leq X_{(2)} \leq \cdots \leq X_{(n)}$ denotes the corresponding order statistics. The PDF of $r$th order statistic $X_{(r)}$ ($r=1,2, \ldots,n$) is given by 
	\begin{align*}
		f_{X_{(r)}}(x) &=\frac{1}{B(r, n-r+1)} (F(x))^{r-1}(1-F(x))^{n-r} f(x) \\
		&=\frac{f(x)}{B(r, n-r+1)} \sum_{i=0}^{r-1}\binom{r-1}{i}(-1)^{i}(1-F(x))^{n+i-r}\\
		&=\frac{f(x)}{B(r, n-r+1)}  \sum_{i=0}^{r-1}\binom{r-1}{i}(-1)^{i}\left(1+p\left(\alpha x+\frac{\beta x^{2}}{2}\right)^{\gamma}\right)^{n+i-r}e^{-(n+i-r)\left(\alpha x+\frac{\beta x^{2}}{2}\right)^{\gamma}},
	\end{align*}
	where $f(\cdot)$ denotes the PDF of RTGLE distribution given in equation (\ref{PDF1}), and $B(a,b)$ denotes the usual beta function. Moreover, the PDFs of the smallest and the largest order statistics are, respectively, given below.
	\begin{equation*}
		f_{X_{(1)}}(x)=	n f(x) \left(1+p\left(\alpha x+\frac{\beta x^{2}}{2}\right)^{\gamma}\right)^{n-1}e^{-(n-1)\left(\alpha x+\frac{\beta x^{2}}{2}\right)^{\gamma}},
	\end{equation*}
	and
	\begin{equation*}
		f_{X_{(n)}}(x)=	n f(x)  \sum_{i=0}^{n-1}\binom{n-1}{i}(-1)^{i}\left(1+p\left(\alpha x+\frac{\beta x^{2}}{2}\right)^{\gamma}\right)^{i}e^{-i \left(\alpha x+\frac{\beta x^{2}}{2}\right)^{\gamma}}.
	\end{equation*}
	Let $X_1,X_2,\ldots$ be a sequence of random variables from the RTGLE distribution, and let $R_1, R_2,\ldots, R_n$ be the first $n$ upper record statistics observed from the sequence $X_1,X_2,\ldots$. Then, the PDF of $n$th record statistic $R_n$ is given by 
	\begin{equation*}
		f_{R_n}(r_n)=\frac{\left[-\log\left\{1+p\left(\alpha r_n+\frac{\beta r_{n}^2}{2}\right)^{\gamma}\right\} + \left(\alpha r_n+\frac{\beta r_{n}^2}{2}\right)^{\gamma} \right]^n}{n !} f(r_n), \;\; r_n>0,
	\end{equation*}
	and, the joint PDF of the first $n$ upper records $\boldsymbol{R}=(R_1,R_2,\ldots,R_n)$ is given by 
	\begin{equation*}
		f_{\boldsymbol{R}}(\boldsymbol{r})=	\left(\prod_{j=1}^{n-1} \frac{\gamma(\alpha+\beta r_j)\left(\alpha r_j +\frac{\beta r_{j}^{2}}{2}\right)^{\gamma-1}\left[1-p+p\left(\alpha r_j+\frac{\beta r_{j}^{2}}{2}\right)^{\gamma}\right]}{1+p\left(\alpha r_j+\frac{\beta r_{j}^{2}}{2}\right)^{\gamma}} \right) f(r_n), 
	\end{equation*}
	where $\boldsymbol{r}=(r_1,r_2,\ldots,r_n)$  denotes the observed value of $\boldsymbol{R}=(R_1,R_2,\ldots,R_n)$ with $r_1<r_2<\cdots < r_n$, and $f(\cdot)$ denotes the PDF given in equation (\ref{PDF1}).
	\subsection{R\'enyi Entropy}\label{sec9}
	The entropy of a random variable $X$ is a measure of the variation of uncertainty. There are many entopy types in literature. One of these entropy types is R\'enyi entropy. The R\'enyi entropy of the random variable $X$ is defined as 
	\begin{equation}\label{r}
		I_{\rho}(X)=\frac{1}{(1-\rho)} \log\left(  \int_{0}^{\infty} f(x)^{\rho}dx\right), 
	\end{equation}
	where $ \rho>0,\rho \neq 1$. Let $X$ be random variable having the RTGLE distribution with PDF given in (\ref{PDF1}). Now, set $z=\left(\alpha x+\frac{\beta x^{2}}{2}\right)^{\gamma}$, which implies that $x=\frac{1}{\beta}\left(\left(2 \beta z^{\frac{1}{\gamma}}+\alpha^{2}\right)^{\frac{1}{2}}-\alpha\right)$, and $dz=\gamma(\alpha+\beta x)\left(\alpha x+\frac{\beta x^{2}}{2}\right)^{\gamma-1}dx$. Using these values, we get 
	\begin{align*}
		&\int_{0}^{\infty} f(x)^{\rho}dx\\
		&=\int_{0}^{\infty}\left( \gamma(\alpha+\beta x)\left(\alpha x+\frac{\beta x^{2}}{2}\right)^{\gamma-1}
		\left[1-p+p\left(\alpha x+\frac{\beta x^{2}}{2}\right)^{\gamma}\right]
		e^{-\left(\alpha x+\frac{\beta x^{2}}{2}\right)^{\gamma}}\right)^{\rho}dx\\
		&=\gamma^{\rho-1}\int_{0}^{\infty}\left(\alpha^2+2\beta z^{\frac{1}{\gamma}} \right)^{\frac{\rho-1}{2}}(1-p+pz)^\rho z^{\frac{(\rho-1)(\gamma-1)}{\gamma}}e^{-\rho z}dz\\
		&=\gamma^{\rho-1}\sum_{i=0}^{\infty}\binom{\rho}{i}p^i(1-p)^{\rho-i}\int_{0}^{\infty}\left(\alpha^2+2\beta z^{\frac{1}{\gamma}} \right)^{\frac{\rho-1}{2}} z^{\frac{(\rho-1)(\gamma-1)}{\gamma}+i}e^{-\rho z}dz\\
		&=\gamma^{\rho-1}\sum_{i=0}^{\infty}\binom{\rho}{i}p^i(1-p)^{\rho-i}\Biggl\{\int_{0}^{\tau^{-\gamma}}\alpha^{\rho-1}\left(1+\tau z^{1/\gamma} \right)^{\frac{\rho-1}{2}} z^{\frac{(\rho-1)(\gamma-1)}{\gamma}+i}e^{-\rho z}dz\\&\quad+\int_{\tau^{-\gamma}}^{\infty}\left(1+\frac{1}{\tau z^{1/\gamma}} \right)^{\frac{\rho-1}{2}}(\beta z^{1/\gamma})^{\frac{\rho-1}{2}} z^{\frac{(\rho-1)(\gamma-1)}{\gamma}+i}e^{-\rho z}dz\Biggr\}\quad \text{ let }\left( \tau=\frac{2\beta}{\alpha^2}\right) \\
		&=\gamma^{\rho-1}\sum_{i=0}^{\infty}\binom{\rho}{i}p^i(1-p)^{\rho-i}\Biggl\{\sum_{j=0}^{\infty}\binom{\frac{\rho-1}{2}}{j}(2\beta)^j\alpha^{\rho-1-2j}\int_{0}^{\tau^{-\gamma}} z^{\frac{(\rho-1)(\gamma-1)+j}{\gamma}+i}e^{-\rho z}dz\\&\quad+\sum_{j=0}^{\infty}\binom{\frac{\rho-1}{2}}{j}(2\beta)^j\alpha^{\rho-1-2j}\int_{\tau^{-\gamma}}^{\infty}z^{\frac{(\rho-1)(\gamma-1)+j}{\gamma}+i}e^{-\rho z}dz\Biggr\}\\
		&=\gamma^{\rho-1}\sum_{i=0}^{\infty}\sum_{j=0}^{\infty}\binom{\rho}{i}\binom{\frac{\rho-1}{2}}{j}p^i(1-p)^{\rho-i}(2\beta)^j\alpha^{\rho-1-2j}\int_{0}^{\infty} z^{\frac{(\rho-1)(\gamma-1)+j}{\gamma}+i}e^{-\rho z}dz\\
		&=\gamma^{\rho-1}\sum_{i=0}^{\infty}\sum_{j=0}^{\infty}\binom{\rho}{i}\binom{\frac{\rho-1}{2}}{j}p^i(1-p)^{\rho-i}(2\beta)^j\alpha^{\rho-1-2j}\frac{\Gamma\left(\frac{(\rho-1)(\gamma-1)+j}{\gamma}+i+1\right) }{\rho^{\frac{(\rho-1)(\gamma-1)+j}{\gamma}+i+1}}.\numberthis\label{r1}
	\end{align*}
Now, by using \eqref{r1} in \eqref{r}, we get desired result.
	\section{Parameters Estimation}\label{sec5}
	This section deals with the estimation of unknown parameters of RTGLE $(\alpha,\beta,\gamma,p)$ distribution. Various methods of estimation have applied to provide the estimates, and the performance of these methods have assessed via a simulation study.  
	\subsection{Maximum Likelihood Estimators}
	Let $X_{1}, X_{2}, \ldots, X_{n}$ be a random sample taken from RTGLE distribution with four parameters $\alpha,\beta, \gamma$ and $p$. The log-likelihood function is given by
	\begin{align*}
		l(\alpha,\beta, \gamma,p|\boldsymbol{x})&=n\log \gamma+\sum_{i=1}^{n} \log \left(\alpha+\beta x_{i}\right)+(\gamma-1) \sum_{i=1}^{n} \log \left(\alpha x_{i}+\frac{\beta x_{i}^{2}}{2}\right)\\&\quad+\sum_{i=1}^{n} \log \left[(1-p)+p\left(\alpha x_{i}+\frac{\beta x_{i}^{2}}{2}\right)^{\gamma}\right]-\sum_{i=1}^{n}\left(\alpha x_{i}+\frac{\beta x_{i}^{2}}{2}\right)^{\gamma},
	\end{align*}
	where $\boldsymbol{x}=(x_1,x_2,\ldots,x_n)$. The maximum likelihood estimators (MLEs) of parameters are obtained by solving the following  non-linear equations.
	\begin{align}\label{MLEq1}
		\frac{\partial}{\partial \alpha}l(\alpha,\beta, \gamma,p|\boldsymbol{x})&=\sum_{i=1}^{n} \frac{1}{\alpha+\beta x_{i}}+\sum_{i=1}^{n} \frac{2(\gamma-1)x_i }{2\alpha x_{i}+\beta x_{i}^{2}}+p\gamma\sum_{i=1}^{n} \frac{ x_{i}\left(\alpha x_{i}+\frac{\beta x_{i}^{2}}{2}\right)^{\gamma-1}}{1-p+p\left(\alpha x_{i}+\frac{\beta x_{i}^{2}}{2}\right)^{\gamma}}
		\nonumber\\& \qquad\qquad-\gamma\sum_{i=1}^{n} x_{i}\left(\alpha x_{i}+\frac{\beta x_{i}^{2}}{2}\right)^{\gamma-1}=0,
	\end{align}
	\begin{align}\label{MLEq2}
		\frac{\partial}{\partial \beta}l(\alpha,\beta, \gamma,p|\boldsymbol{x})&=
		\sum_{i=1}^{n} \frac{x_i}{\alpha+\beta x_{i}}+\sum_{i=1}^{n} \frac{(\gamma-1)x_i^2 }{2\alpha x_{i}^2+\beta x_{i}^{2}}+
		\frac{p\gamma}{2}\sum_{i=1}^{n} \frac{ x_{i}\left(\alpha x_{i}+\frac{\beta x_{i}^{2}}{2}\right)^{\gamma-1}}{ 1-p+p\left(\alpha x_{i}+\frac{\beta x_{i}^{2}}{2}\right)^{\gamma} }
		\nonumber\\&\qquad \qquad -\frac{\gamma}{2}\sum_{i=1}^{n} x_{i}^2\left(\alpha x_{i}+\frac{\beta x_{i}^{2}}{2}\right)^{\gamma-1}=0,
	\end{align}
	\begin{align}\label{MLEq3}
		\frac{\partial}{\partial \gamma}l(\alpha,\beta, \gamma,p|\boldsymbol{x})&=-\frac{n}{\gamma}+\sum_{i=1}^{n} \log \left(\alpha x_{i}+\frac{\beta x_{i}^{2}}{2}\right)+p\sum_{i=1}^{n}\frac{\left(\alpha x_{i}+\frac{\beta x_{i}^{2}}{2}\right)^{\gamma}\log \left(\alpha x_{i}+\frac{\beta x_{i}^{2}}{2}\right)}{1-p+p\left(\alpha x_{i}+\frac{\beta x_{i}^{2}}{2}\right)^{\gamma} }\nonumber\\&\qquad \qquad
		-\sum_{i=1}^{n} \left(\alpha x_{i}+\frac{\beta x_{i}^{2}}{2}\right)^{\gamma}\log \left(\alpha x_{i}+\frac{\beta x_{i}^{2}}{2}\right)=0,
	\end{align}
	and
	\begin{align}\label{MLEq4}
		\frac{\partial}{\partial p}l(\alpha,\beta, \gamma,p|\boldsymbol{x})=\sum_{i=1}^{n} \frac{\left(\alpha x_{i}+\frac{\beta x_{i}^{2}}{2}\right)^{\gamma}-1}{1-p+p\left(\alpha x_{i}+\frac{\beta x_{i}^{2}}{2}\right)^{\gamma}}=0.
	\end{align}
The solution of the non-linear equations (\ref{MLEq1})-(\ref{MLEq4}) cannot be obtained in closed form, so we will use the Monte Carlo simulation to obtain the MLEs of the unknown parameters $\alpha, \beta, \gamma$ and $p$. 

\subsection{Least Squares and Weighted Least Squares Estimators}
Swain et al. (1988) suggested the method of least squares estimation to obtain the estimates of unknown parameters by minimizing the distance between the vector of uniformized order statistics and the corresponding vector of expected values. This method can be described as: Let $X_{1}, X_{2}, \ldots, X_{n}$ be a random sample of size $n$ from a continuous probability distribution with DF $F(\cdot)$ and let $X_{(1)} \leq X_{(2)} \leq \cdots \leq X_{(n)}$ denote the corresponding  order statistics. Then, for $1 \leq i \leq n$, the transformed variable $F(X_{(i)})$ has the distribution of $i$th uniform order statistic, i.e., the $i$th smallest observation in a random sample of size $n$ from the standard uniform distribution over $(0,1)$. The mean and variance of the transformed variable $F(X_{(i)})$ are, respectively, given by  
	\begin{align*}
		E\left(F\left(X_{(i)}\right)\right)=\frac{i}{n+1} ; i=1,2, \ldots, n,
	\end{align*}
	and
	\begin{align*}
		\operatorname{Var}\left(F\left(X_{(i)}\right)\right)=\frac{i(n-i+1)}{(n+1)^{2}(n+2)} ; i=1,2, \ldots, n.
	\end{align*}
Using these values, two methods of least squares can be used.\\
{\bf Least Squares Estimators:}
The least square estimators (LSEs), $\hat{\alpha}_{LSE}, \hat{\beta}_{LSE}$, $\hat{\gamma}_{LSE}$ and $\hat{p}_{LSE}$, of $\alpha, \beta,\gamma$ and $p$ can be obtained by minimizing 
	\begin{align}\label{eq4.7}
		S(\alpha, \beta,\gamma, p)=\sum_{i=1}^{n}\left(F\left(x_{(i)} \right)-\frac{i}{n+1}\right)^{2},
	\end{align}
with respect to the unknown parameters $\alpha, \beta,\gamma$ and $p$. Now, using the DF given in equation (\ref{eq2.3}) and equation (\ref{eq4.7}), we can derive four non-linear equations after partially differentiating with respect to unknown parameters. The solution of these non-linear equations can be computed by using the Monte Carlo simulation. \\
{\bf Weighted Least Squares Estimators:} The weighted least squares estimators (WLSEs), $\hat{\alpha}_{WLSE},\hat{\beta}_{WLSE}, \hat{\gamma}_{WLSE}$ and $\hat{p}_{WLSE}$ can be obtained by minimizing 
	\begin{equation*}
		W(\alpha, \beta,\gamma, p)=\sum_{i=1}^{n} \tau_{i}\left(F\left(X_{(i)}\right)-\frac{i}{n+1}\right)^{2},
	\end{equation*}
with respect to unknown parameters, where 
$\tau_{i}=\frac{1}{\operatorname{Var}\left(F\left(X_{(i)}\right)\right)}=\frac{(n+1)^{2}(n+2)}{i(n-i+1)}$. Therefore, in case of RTGLE distribution, the WLSEs of $\alpha, \beta,\gamma$ and $p$ can be obtained by using the similar procedure to least squares estimators.     
	
	\subsection{Anderson-Darling Estimator}\label{section5.3}
	Anderson-Darling estimator (ADE) is defined as an estimator that minimizes the Anderson-Darling distance between the empirical DF and theoretical DF. For more details, see Anderson and Darling (1952), and Boos (1982).  Let $X_{1}, X_{2}, \ldots, X_{n}$ be a random sample of size $n$ from RTGLE distribution with DF $F(\cdot)$ and let $X_{(1)} \leq X_{(2)} \leq \cdots \leq X_{(n)}$ denote the corresponding  order statistics. Then, the ADEs $\hat{\alpha}_{ADE},\hat{\beta}_{ADE}, \hat{\gamma}_{ADE}$ and $\hat{p}_{ADE}$ of unknown parameters of the RTGLE distribution can be obtained by minimizing the following equation 
	\begin{equation*}
		A(\alpha, \beta,\gamma, p)=-n-\frac{1}{n} \sum_{i=1}^{n}(2 i-1)\left[\log F\left(X_{(i)}\right)+\log \bar{F}\left(X_{(n+1-i)}\right)\right],
	\end{equation*}
with respect to unknown parameters, where $\bar{F}(x)=1-F(x)$ denotes the survival function of RTGLE distribution. 
	\subsection{Cram\'er-von Mises Estimator}
This is another method of estimation based on minimum-distance between the empirical DF and theoretical DF. Similar to Anderson-Darling method, we want to minimize the Cram\'er-von Mises statistic 
	\begin{equation*}
		C(\alpha, \beta,\gamma, p)=\frac{1}{12 n}+\sum_{i=1}^{n}\left(F\left(X_{(i)}\right)-\frac{2 i-1}{2 n}\right)^{2},
	\end{equation*}
with respect to unknown parameters, where all notations are same as given in subsection \ref{section5.3}. 

	\section{Simulation}\label{sec6}
	In this section, we portray a simulation study to assess the new proposed RTGLE distribution.  The Monte Carlo algorithm is used for parameter estimation and calculating statistical properties. We have utilized five different methods to estimate the population parameters of the proposed RTGLE distribution. The BFGS (Broyden-Fletcher-Goldfarb-Shanno) algorithm, established by Broyden (1970), Fletcher (1970), Goldfarb (1970), Shanno (1970), is used in R programming to attain these estimates. Tables \ref{table4}-\ref{table7} represent the biases and MSEs of the proposed model for all the estimation methods MLEs, LSEs, WLSEs, ADEs and CMEs. The reported results are based on $5000$ replicates for different sample sizes $(n = 20, 50, 100, 200)$ and parameters combinations. Tables \ref{table4}-\ref{table7} reveal that biases and MSEs of all the parameters lie in a small interval, indicating the stable nature of the distribution. In many places, we also observed the reductions of MSEs with the increase of sample sizes, which is evident. The behaviors of the proposed distribution in Tables \ref{table4}-\ref{table7} imply that the distribution performs well in terms of bias and MSEs.  It is also observed that the proposed model is equally efficient in the different choices of parameters. Hence, simulation results establish the empirical evidence of the stability of our estimates.

	\begin{table}[H]
		\caption{Biases and MSEs of estimators for $\alpha=1.2$, $\beta=0.5$, $\gamma=1.5$, $p=0.8$}
		\centering
		\scalebox{0.85}{
			\begin{tabular}{|c|c|cccc|cccc|}
				\hline
				$n$&Estimator&  & \multicolumn{1}{c}{Bias}      &       &       &  &     \multicolumn{1}{c}{MSE}  &      &   \\
				\cline{1-6}\cline{7-10}
				&& \multicolumn{1}{c}{$\hat{\alpha}$} & \multicolumn{1}{c}{$\hat{\beta}$} & \multicolumn{1}{c}{$\hat{\gamma}$} & \multicolumn{1}{c|}{$\hat{p}$} & \multicolumn{1}{c}{$\hat{\alpha}$} & \multicolumn{1}{c}{$\hat{\beta}$} & \multicolumn{1}{c}{$\hat{\gamma}$} & \multicolumn{1}{c|}{$\hat{p}$} \\
				\cline{3-10}
				20&MLE & -0.1013 & 0.0394 & 0.1885 & -0.1230 & 0.1666 & 0.4020 & 0.1823 & 0.1179 \\ 
				&OLS & -0.1222 & 0.2091 & -0.0305 & -0.0897 & 0.2493 & 0.6318 & 0.1275 & 0.0744 \\ 
				&	WLS & -0.1252 & 0.2823 & -0.0438 & -0.0673 & 0.2604 & 0.8241 & 0.1349 & 0.0622 \\ 
				&	CME & -0.1090 & 0.1189 & 0.1048 & -0.0857 & 0.2263 & 0.4474 & 0.1624 & 0.0816 \\ 
				&	ADE & -0.1542 & 0.3869 & -0.0528 & -0.0419 & 0.2946 & 1.0628 & 0.1162 & 0.0623 \\ 
				\hline				
				50&MLE & -0.1330 & 0.1528 & 0.0912 & -0.1176 & 0.1619 & 0.5542 & 0.1230 & 0.1040 \\ 
				&OLS & -0.1343 & 0.1942 & -0.0223 & -0.0866 & 0.2076 & 0.4905 & 0.0681 & 0.0672 \\ 
				&	WLS & -0.1445 & 0.2627 & -0.0334 & -0.0720 & 0.2126 & 0.6633 & 0.0798 & 0.0624 \\ 
				&	CME & -0.1211 & 0.1437 & 0.0350 & -0.0859 & 0.1923 & 0.4010 & 0.0726 & 0.0739 \\ 
				&	ADE & -0.1876 & 0.4321 & -0.0769 & -0.0493 & 0.2573 & 1.0306 & 0.0887 & 0.0596 \\ 
				\hline				
				100&MLE & -0.1469 & 0.2670 & 0.0146 & -0.0867 & 0.1541 & 0.6649 & 0.1035 & 0.0863 \\ 
				&OLS & -0.1352 & 0.1950 & -0.0155 & -0.0839 & 0.1756 & 0.4548 & 0.0602 & 0.0627 \\ 
				&	WLS & -0.1477 & 0.2774 & -0.0324 & -0.0687 & 0.1799 & 0.6508 & 0.0782 & 0.0597 \\ 
				&	CME & -0.1254 & 0.1635 & 0.0151 & -0.0843 & 0.1657 & 0.3964 & 0.0629 & 0.0683 \\ 
				&	ADE & -0.2113 & 0.4870 & -0.0937 & -0.0525 & 0.2375 & 1.0724 & 0.0890 & 0.0584 \\ 
				\hline				
				200&MLE & -0.1872 & 0.3383 & 0.0106 & -0.1067 & 0.1559 & 0.8338 & 0.1203 & 0.0865 \\ 
				&OLS & -0.1570 & 0.2150 & 0.0024 & -0.1004 & 0.1565 & 0.4835 & 0.0675 & 0.0638 \\ 
				&	WLS & -0.1574 & 0.2795 & -0.0137 & -0.0809 & 0.1520 & 0.6417 & 0.0867 & 0.0603 \\ 
				&	CME & -0.1455 & 0.1806 & 0.0243 & -0.1021 & 0.1457 & 0.4218 & 0.0693 & 0.0682 \\ 
				&	ADE & -0.2311 & 0.5331 & -0.0980 & -0.0576 & 0.2163 & 1.1162 & 0.0977 & 0.0556 \\ 
				\hline		
			\end{tabular}
		}
		\label{table4}
	\end{table}
	\begin{table}[H]
		\caption{Biases and MSEs of estimators for $\alpha=1.1$, $\beta=1.2$, $\gamma=2$, $p=0.8$}
		\centering
		\scalebox{0.85}{
			\begin{tabular}{|c|c|cccc|cccc|}
				\hline
				$n$&Estimator&  & \multicolumn{1}{c}{Bias}      &       &       &  &     \multicolumn{1}{c}{MSE}  &      &   \\
				\cline{1-6}\cline{7-10}
				&& \multicolumn{1}{c}{$\hat{\alpha}$} & \multicolumn{1}{c}{$\hat{\beta}$} & \multicolumn{1}{c}{$\hat{\gamma}$} & \multicolumn{1}{c|}{$\hat{p}$} & \multicolumn{1}{c}{$\hat{\alpha}$} & \multicolumn{1}{c}{$\hat{\beta}$} & \multicolumn{1}{c}{$\hat{\gamma}$} & \multicolumn{1}{c|}{$\hat{p}$} \\
				\cline{3-10}
				20&MLE & 0.1003 & -0.3637 & 0.3586 & -0.0460 & 0.1543 & 0.7475 & 0.4135 & 0.0827 \\ 
				&LSE & 0.0157 & -0.1255 & 0.0900 & -0.0679 & 0.2140 & 0.8335 & 0.2391 & 0.0639 \\ 
				&	WLS & 0.0262 & -0.1635 & 0.1655 & -0.0786 & 0.2084 & 1.0482 & 0.3446 & 0.0667 \\ 
				&	CME & 0.0527 & -0.2985 & 0.3247 & -0.0710 & 0.1879 & 0.7119 & 0.4079 & 0.0735 \\ 
				&	ADE & 0.0435 & -0.2015 & 0.2128 & -0.0564 & 0.2052 & 1.0706 & 0.3201 & 0.0650 \\ 
				\hline
				
				50&MLE & 0.0322 & -0.2185 & 0.2663 & -0.0861 & 0.1305 & 0.8220 & 0.3103 & 0.0893 \\ 
				&LSE & -0.0096 & -0.1061 & 0.0988 & -0.0737 & 0.1653 & 0.6696 & 0.1286 & 0.0613 \\ 
				&	WLS & -0.0035 & -0.1469 & 0.1756 & -0.0929 & 0.1719 & 0.9229 & 0.2221 & 0.0702 \\ 
				&	CME & 0.0333 & -0.2578 & 0.2256 & -0.0775 & 0.1536 & 0.6128 & 0.1895 & 0.0676 \\ 
				&	ADE & -0.0511 & 0.0585 & 0.0902 & -0.0565 & 0.1970 & 1.1469 & 0.1872 & 0.0581 \\ 
				\hline
				
				100&MLE & -0.0176 & -0.0343 & 0.1547 & -0.0700 & 0.1182 & 0.8567 & 0.2430 & 0.0762 \\ 
				&LSE & -0.0303 & -0.0630 & 0.0977 & -0.0804 & 0.1380 & 0.6098 & 0.1141 & 0.0612 \\ 
				&	WLS & -0.0214 & -0.1118 & 0.1793 & -0.1006 & 0.1497 & 0.9059 & 0.2230 & 0.0695 \\ 
				&	CME & 0.0164 & -0.2260 & 0.2013 & -0.0890 & 0.1291 & 0.5684 & 0.1559 & 0.0672 \\ 
				&	ADE & -0.0219 & -0.1308 & 0.2182 & -0.1136 & 0.1526 & 0.9843 & 0.2682 & 0.0794 \\ 
				\hline
				
				200&MLE & -0.0766 & 0.1023 & 0.1230 & -0.0921 & 0.1166 & 0.9654 & 0.2575 & 0.0778 \\ 
				&LSE & -0.0630 & 0.0058 & 0.0933 & -0.0939 & 0.1153 & 0.5569 & 0.1152 & 0.0610 \\ 
				&	WLS & -0.0452 & -0.0575 & 0.1769 & -0.1085 & 0.1312 & 0.8833 & 0.2338 & 0.0654 \\ 
				&	CME & -0.0038 & -0.2102 & 0.2189 & -0.1141 & 0.1097 & 0.5609 & 0.1737 & 0.0698 \\ 
				&	ADE & -0.0484 & -0.0623 & 0.2029 & -0.1180 & 0.1368 & 0.9777 & 0.2694 & 0.0713 \\ 
				\hline
				
			\end{tabular}
		}
		\label{table5}
	\end{table}
	\begin{table}[H]
		\caption{Biases and MSEs of estimators for $\alpha=0.8$, $\beta=0.8$, $\gamma=1$, $p=0.7$}
		\centering
		\scalebox{0.85}{
			\begin{tabular}{|c|c|cccc|cccc|}
				\hline
				$n$&Estimator&  & \multicolumn{1}{c}{Bias}      &       &       &  &     \multicolumn{1}{c}{MSE}  &      &   \\
				\cline{1-6}\cline{7-10}
				&& \multicolumn{1}{c}{$\hat{\alpha}$} & \multicolumn{1}{c}{$\hat{\beta}$} & \multicolumn{1}{c}{$\hat{\gamma}$} & \multicolumn{1}{c|}{$\hat{p}$} & \multicolumn{1}{c}{$\hat{\alpha}$} & \multicolumn{1}{c}{$\hat{\beta}$} & \multicolumn{1}{c}{$\hat{\gamma}$} & \multicolumn{1}{c|}{$\hat{p}$} \\
				\cline{3-10}
				20&MLE & 0.0230 & -0.1721 & 0.2000 & -0.1035 & 0.1665 & 0.4711 & 0.1318 & 0.1076 \\ 
				&LSE & 0.0355 & 0.0210 & 0.0078 & -0.0175 & 0.2806 & 0.6420 & 0.0609 & 0.0600 \\ 
				&	WLS & 0.0160 & 0.1420 & -0.0106 & -0.0032 & 0.2921 & 0.8093 & 0.0644 & 0.0515 \\ 
				&	CME & 0.0533 & -0.0947 & 0.1115 & -0.0181 & 0.2642 & 0.5076 & 0.0850 & 0.0710 \\ 
				&	ADE & 0.0074 & 0.1343 & 0.0227 & -0.0047 & 0.2894 & 0.8422 & 0.0664 & 0.0596 \\ 
				\hline
				
				50&MLE & -0.0497 & -0.0347 & 0.1133 & -0.1171 & 0.1294 & 0.4841 & 0.0746 & 0.0925 \\ 
				&LSE & -0.0018 & 0.0393 & 0.0033 & -0.0234 & 0.2098 & 0.5019 & 0.0353 & 0.0522 \\ 
				&	WLS & -0.0486 & 0.1459 & -0.0083 & -0.0301 & 0.1933 & 0.6118 & 0.0398 & 0.0498 \\ 
				&	CME & 0.0197 & -0.0332 & 0.0498 & -0.0238 & 0.2060 & 0.4409 & 0.0401 & 0.0591 \\ 
				&	ADE & -0.0542 & 0.1563 & 0.0005 & -0.0289 & 0.1901 & 0.6392 & 0.0416 & 0.0529 \\ 
				\hline
				
				100&MLE & -0.0871 & 0.0406 & 0.0783 & -0.1189 & 0.1001 & 0.4687 & 0.0624 & 0.0810 \\ 
				&LSE & -0.0494 & 0.0873 & -0.0005 & -0.0408 & 0.1533 & 0.4499 & 0.0308 & 0.0483 \\ 
				&	WLS & -0.0903 & 0.1608 & -0.0037 & -0.0552 & 0.1366 & 0.5176 & 0.0370 & 0.0487 \\ 
				&	CME & -0.0341 & 0.0413 & 0.0267 & -0.0429 & 0.1520 & 0.4241 & 0.0335 & 0.0534 \\ 
				&	ADE & -0.0991 & 0.1865 & -0.0041 & -0.0535 & 0.1381 & 0.5559 & 0.0390 & 0.0509 \\ 
				\hline
				
				200&MLE & -0.1106 & 0.0573 & 0.0614 & -0.1213 & 0.0784 & 0.3898 & 0.0496 & 0.0650 \\ 
				&LSE & -0.1029 & 0.1500 & -0.0044 & -0.0596 & 0.1190 & 0.4478 & 0.0306 & 0.0437 \\ 
				&	WLS & -0.1288 & 0.1866 & -0.0051 & -0.0746 & 0.1026 & 0.4466 & 0.0339 & 0.0460 \\ 
				&	CME & -0.0895 & 0.1103 & 0.0139 & -0.0630 & 0.1172 & 0.4225 & 0.0320 & 0.0476 \\ 
				&	ADE & -0.1304 & 0.2052 & -0.0085 & -0.0684 & 0.1040 & 0.4723 & 0.0343 & 0.0456 \\ 
				\hline
			\end{tabular}
		}
		\label{table6}
	\end{table}
	
	\begin{table}[H]
		\caption{Biases and MSEs of estimators for $\alpha=0.8$, $\beta=0.5$, $\gamma=0.8$, $p=0.7$}
		\centering
		\scalebox{0.85}{
			\begin{tabular}{|c|c|cccc|cccc|}
				\hline
				$n$&Estimator&  & \multicolumn{1}{c}{Bias}      &       &       &  &     \multicolumn{1}{c}{MSE}  &      &   \\
				\cline{1-6}\cline{7-10}
				&& \multicolumn{1}{c}{$\hat{\alpha}$} & \multicolumn{1}{c}{$\hat{\beta}$} & \multicolumn{1}{c}{$\hat{\gamma}$} & \multicolumn{1}{c|}{$\hat{p}$} & \multicolumn{1}{c}{$\hat{\alpha}$} & \multicolumn{1}{c}{$\hat{\beta}$} & \multicolumn{1}{c}{$\hat{\gamma}$} & \multicolumn{1}{c|}{$\hat{p}$} \\
				\cline{3-10}
				20&MLE & -0.1790 & -0.0853 & 0.1946 & -0.2534 & 0.1040 & 0.2485 & 0.1050 & 0.1601 \\ 
				&LSE & -0.1118 & 0.0075 & 0.0632 & -0.1612 & 0.1677 & 0.3953 & 0.0564 & 0.1112 \\ 
				&	WLS & -0.1257 & 0.1079 & 0.0355 & -0.1360 & 0.1823 & 0.5404 & 0.0535 & 0.1031 \\ 
				&	CME & -0.1047 & -0.0811 & 0.1535 & -0.1717 & 0.1488 & 0.2960 & 0.0818 & 0.1244 \\ 
				&	ADE & -0.1215 & 0.0951 & 0.0592 & -0.1289 & 0.2096 & 0.5143 & 0.0560 & 0.1073 \\ 
				\hline
				50&MLE & -0.2073 & -0.0105 & 0.1240 & -0.2441 & 0.1085 & 0.2855 & 0.0610 & 0.1367 \\ 
				&LSE & -0.0618 & 0.0208 & 0.0234 & -0.0841 & 0.1937 & 0.2767 & 0.0272 & 0.0660 \\ 
				&	WLS & -0.1096 & 0.0837 & 0.0239 & -0.1045 & 0.1679 & 0.3482 & 0.0318 & 0.0717 \\ 
				&	CME & -0.0495 & -0.0367 & 0.0633 & -0.0913 & 0.1863 & 0.2312 & 0.0311 & 0.0752 \\ 
				&	ADE & -0.1404 & 0.0857 & 0.0386 & -0.1282 & 0.1587 & 0.3569 & 0.0356 & 0.0878 \\ 
				\hline
				100&MLE & -0.2522 & -0.0007 & 0.1100 & -0.2795 & 0.1121 & 0.2427 & 0.0548 & 0.1427 \\ 
				&LSE & -0.1333 & 0.0198 & 0.0456 & -0.1385 & 0.1237 & 0.2386 & 0.0313 & 0.0753 \\ 
				&	WLS & -0.1886 & 0.0598 & 0.0529 & -0.1792 & 0.1081 & 0.2667 & 0.0386 & 0.0933 \\ 
				&	CME & -0.1231 & -0.0216 & 0.0716 & -0.1467 & 0.1193 & 0.2138 & 0.0344 & 0.0815 \\ 
				&	ADE & -0.1708 & 0.0850 & 0.0362 & -0.1474 & 0.1205 & 0.2815 & 0.0320 & 0.0848 \\ 
				\hline
				200&MLE & -0.1970 & 0.0435 & 0.0649 & -0.1923 & 0.0936 & 0.2284 & 0.0336 & 0.0887 \\ 
				&LSE & -0.1368 & 0.0824 & 0.0194 & -0.1052 & 0.1099 & 0.2472 & 0.0233 & 0.0523 \\ 
				&	WLS & -0.1675 & 0.1162 & 0.0177 & -0.1241 & 0.1026 & 0.2610 & 0.0250 & 0.0590 \\ 
				&	CME & -0.1285 & 0.0501 & 0.0353 & -0.1119 & 0.1073 & 0.2238 & 0.0243 & 0.0569 \\ 
				&	ADE & -0.1669 & 0.1240 & 0.0157 & -0.1201 & 0.1020 & 0.2682 & 0.0247 & 0.0615 \\ 
				\hline
				
			\end{tabular}
		}
		\label{table7}
	\end{table}

	\section{Real Data Analysis}\label{sec7}
In this section, we have utilized the real data set to see the proficiency of the proposed distribution. This section consists of two parts; in the first part, we have done the exploratory data analysis and calculate MLEs of the proposed distribution with seven other competing models. In the second part, we have done a comparison of the proposed distribution with other models. We use the  performance validation criteria such as log-likelihood, Akaike's Information Criterion (AIC) and few goodness-of-fit tests statistics Kolmogorov-Smirnov (KS), Cram\'er-von Mises (CVM) and Anderson-Darling (AD) for comparison of the proposed RTGLE distribution with other seven distributions.

  To carry out the real data analysis, we have utilized failure times (in weeks) data set by Murthy at el. (2004, p. 180). \\
  {\bf Failure Time Data:} The dataset consists of 50 observations and given by as follows; 0.013, 0.065, 0.111, 0.111, 0.163, 0.309, 0.426, 0.535, 0.684, 0.747, 0.997, 1.284, 1.304, 1.647, 1.829, 2.336, 2.838, 3.269, 3.977, 3.981, 4.520, 4.789, 4.849, 5.202, 5.291, 5.349, 5.911, 6.018, 6.427, 6.456, 6.572, 7.023, 7.087, 7.291, 7.787, 8.596, 9.388, 10.261, 10.713, 11.658, 13.006, 13.388, 13.842, 17.152, 17.283, 19.418, 23.471, 24.777, 32.795, 48.105.
	
	The data is fitted to RTGLE distribution, RTW distribution (see, Tanis and Sara\c{c}o\u{g}lu (2020)), Weibull (W) distribution, Transmuted Weibull (TW) distribution (see, Aryal and Tsokos (2011)), Transmuted Lindley (TL) distribution (see, Merovci (2013)) and Transmuted Log-Logistic (TLL) distribution (see, Aryal (2013)), RT-Linear exponential (RTLE) distribution (see, Balakrishnan and He (2021)), Linear exponential (LE) distribution (see, Bain (1974)). The PDFs of these distributions are given as follows:
	\begin{equation*}
		\begin{gathered}
			f_{RTW}(x)=\theta \gamma x^{\gamma-1} e^{-\theta x^{\gamma}}\left[1+p\left(\theta x^{\gamma}-1\right)\right], ~x,\theta,\gamma>0, p\in[0,1]\\
			f_{W}(x)=\frac{\mu}{\sigma}\left(\frac{x}{\sigma}\right)^{\mu-1} e^{-\left(\frac{x}{\sigma}\right)^{\mu}}, ~x, \mu, \sigma>0, \\
			f_{TW}(x)=\frac{\mu}{\sigma}\left(\frac{x}{\sigma}\right)^{\mu-1} e^{-\left(\frac{x}{\sigma}\right)^{\mu}}\left[1-\lambda+2 \lambda e^{-\left(\frac{x}{\sigma}\right)^{\mu}}\right], ~x, \mu, \sigma>0, \lambda \in[-1,1], \\
			f_{TL}(x)=\frac{\theta^{2}}{\theta+1}(1+x) e^{-\theta x}\left[1-\lambda+2 \lambda \frac{\theta+1+\theta x}{\theta+1} e^{-\theta x}\right], ~x, \theta>0, \lambda \in[-1,1]\\
			f_{TLL}(x)=\frac{\beta \alpha^{\beta} x^{\beta-1}\left[\left(\alpha^{\beta}+x^{\beta}\right)-2 \lambda x^{\beta}\right]}{\left(\alpha^{\beta}+x^{\beta}\right)^{3}}, ~x, \alpha, \beta>0, \lambda \in[-1,1]\\
			f_{RTLE}(x) =(\alpha+\beta x)
			\left[1-p+p\left(\alpha x+\frac{\beta x^{2}}{2}\right)\right]
			e^{-\left(\alpha x+\frac{\beta x^{2}}{2}\right)}, ~x,\alpha,\beta>0, p\in[0,1]	
		\end{gathered}
	\end{equation*}
	and
	\begin{equation*}
		f_{LE}(x) =(\alpha+\beta x)
		\left(\alpha x+\frac{\beta x^{2}}{2}\right)
		e^{-\left(\alpha x+\frac{\beta x^{2}}{2}\right)}, ~x,\alpha,\beta>0.
	\end{equation*}
	In the exploratory data analysis part, we have done outlier analysis, which helps to identify outliers in our dataset. This analysis may improve the results as it eliminates inaccurate observations, leading to biased conclusions. One of the best methods for detecting outliers is Boxplot; therefore, we drew the Boxplot and removed the outliers in the data set for further analysis. Boxplot, fitted PDFs plots and empirical and theoretical DFs plots for fitted distributions are shown in Figures \ref{f4}-\ref{f5}. These figures have evidenced a good fit of the real data set with RTGLE distribution. Table \ref{table8} reports the MLEs for all the models mentioned above, along with their standard errors. To show the effectiveness of RTGLE distribution, we have calculated three different goodness-of-tests statistics, namely, Kolmogorov-Smirnov (KS), Cramer von Mises (CVM) and Anderson-Darling (AD) with the corresponding $p$-values. Further, the values of $-2 \log L$ and AIC$=2r-2 \log L$ have been evaluated, where $L$ denotes the maximized value of the likelihood function, and $r$ is the number of parameters. The results are presented in Table \ref{table9}, and suggest that the RTGLE distribution is the best model among other competing lifetime models. Therefore, the RTGLE distribution may be a good alternative to other lifetime models having the similar statistical properties. 
	\begin{figure}[H]
		\caption{Fitted PDFs}
		\centering
		\label{f4}
		\includegraphics[width=1\linewidth, height=9cm]{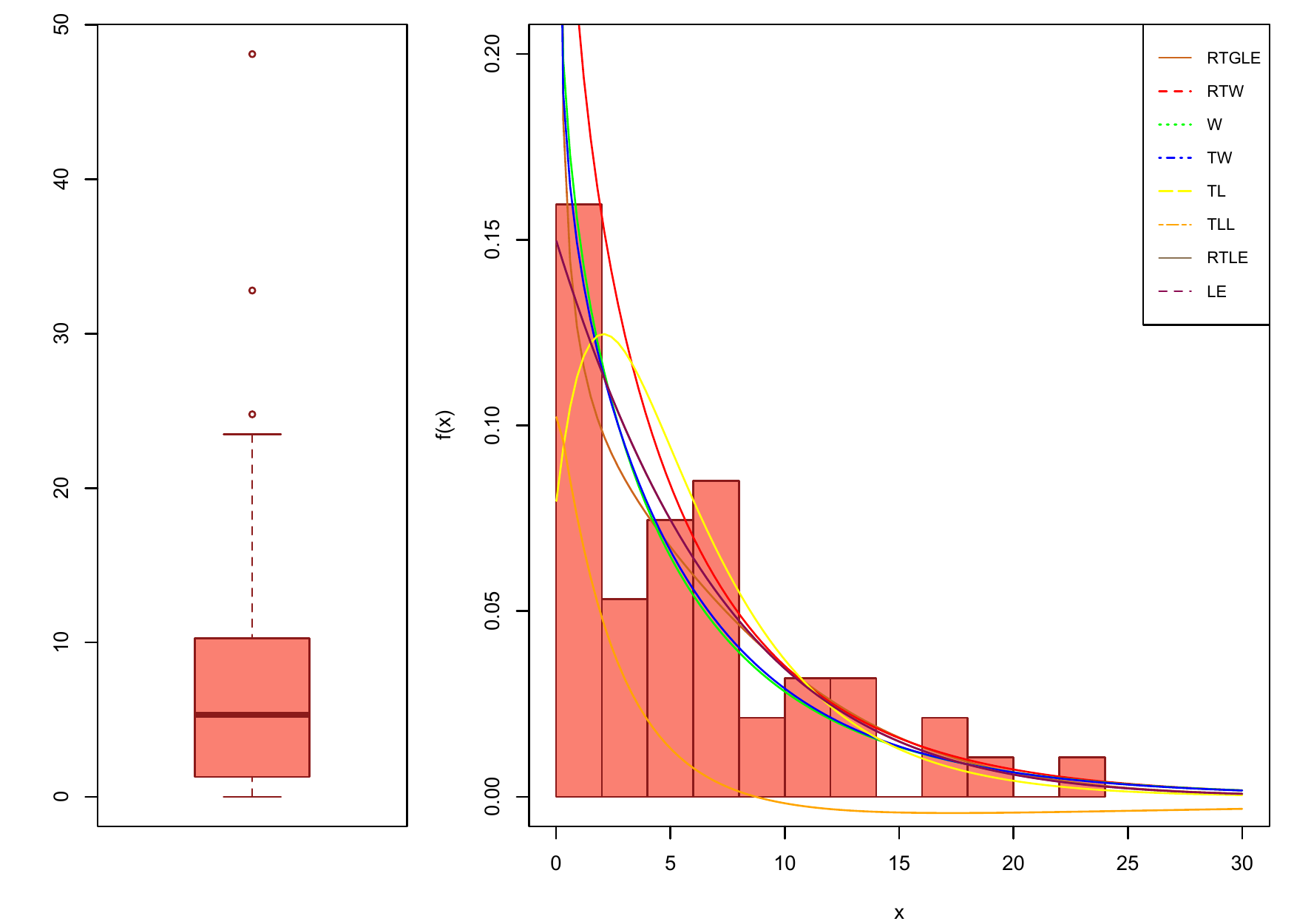} 
	\end{figure}  

\begin{table}[H]
	\caption{MLEs (standard errors) for failure time data }
	\centering
		\scalebox{0.89}{
	\begin{tabular}{|l|l|}
		\hline
		\multicolumn{1}{|c|}{Distribution} & 
		\multicolumn{1}{|c|}{MLEs(standard errors)}\\
		\hline
		RTGLE&$\hat{\alpha}=0.1561~(0.1522)$, $\hat{\beta}=0.0411~(0.0430)$, $\hat{\gamma}=0.6199~(0.1561)$, $\hat{p}=0.4068~(0.4576)$\\
		RTW&$\hat{\theta}=0.3714~(0.1650)$, $\hat{\gamma}=0.8096~(0.1317)$,  $\hat{p}=0.4901~(0.2893)$\\
		W&$\hat{\mu}=0.8857~(0.1077)$, $\hat{\sigma}=5.7710~(0.9938)$\\
		TW&$\hat{\mu}=0.8316~(0.1320)$, $\hat{\sigma}=4.8119~(1.425)$, $\hat{\lambda}=-0.2898~(0.3729)$\\
		TL&$\hat{\theta}=0.2653~(0.0466)$, $\hat{\lambda}=0.2739~(0.3543)$\\
		TLL&$\hat{\alpha}=8.7446~(1.8384)$, $\hat{\beta}=1.0198~(0.116)$, $\hat{\lambda}=1~(0.4225)$\\
		RTLE&$\hat{\alpha}=0.1628~(0.1286)$, $\hat{\beta}=0.0027~(0.0051)$, $\hat{p}=0.0797~(0.7240)$\\
		LE&$\hat{\alpha}=0.1499~(0.0341)$, $\hat{\beta}=0.0026~(0.0047)$,  \\
		\hline
	\end{tabular}
}
\label{table8}
\end{table}

\begin{table}[H]
	\caption{Selection criteria statistics for failure time data}
	\centering
	\scalebox{0.93}{
	\begin{tabular}{|l|cccccccc|}
		\hline
		Model& $-2\log L$ & AIC & KS & p(KS) & CVM &  p(CVM)& AD & p(AD) \\
		\hline
		\textbf{RTGLE} & $\boldsymbol{258.2350}$ & $\boldsymbol{266.2350}$ & $\boldsymbol{0.0869}$ & $\boldsymbol{0.8698}$ & $\boldsymbol{0.0503}$ & $\boldsymbol{0.8768}$ & $\boldsymbol{0.2836}$ & $\boldsymbol{0.9497}$ \\
		RTW & 261.4522 & 267.4522 & 0.1170 & 0.5403 & 0.1060 & 0.5579 & 0.6295 & 0.6197 \\
		W & 262.4914 & 266.4914 & 0.1298 & 0.4065 & 0.1303 & 0.4569 & 0.7487 & 0.5187  \\
		TW & 261.9948 & 267.9948 & 0.1202 & 0.5054 & 0.1132 & 0.5258 & 0.6758 & 0.5785 \\
		TL & 270.8568 & 274.8568 & 0.1531 & 0.2203 & 0.2023 & 0.2636 & 2.4818 & 0.0509  \\
		TLL & 269.4633 & 275.4633 & 0.1406 & 0.3106 & 0.1704 & 0.3338 & 1.1554 & 0.2849\\
		RTLE & 263.2147 & 269.2147 & 0.1062 & 0.6637 & 0.1065 & 0.5559 & 1.0587 & 0.3275\\
		LE & 263.2192 & 267.2192 & 0.1062 & 0.6647 & 0.1067 & 0.5550 & 1.0585 & 0.3276 \\
		\hline
	\end{tabular}
}
\label{table9}
\end{table}
	\begin{figure}[H]
		\caption{Fitted CDFs}
		\centering
		\label{f5}
		\includegraphics[width=1\linewidth, height=10cm]{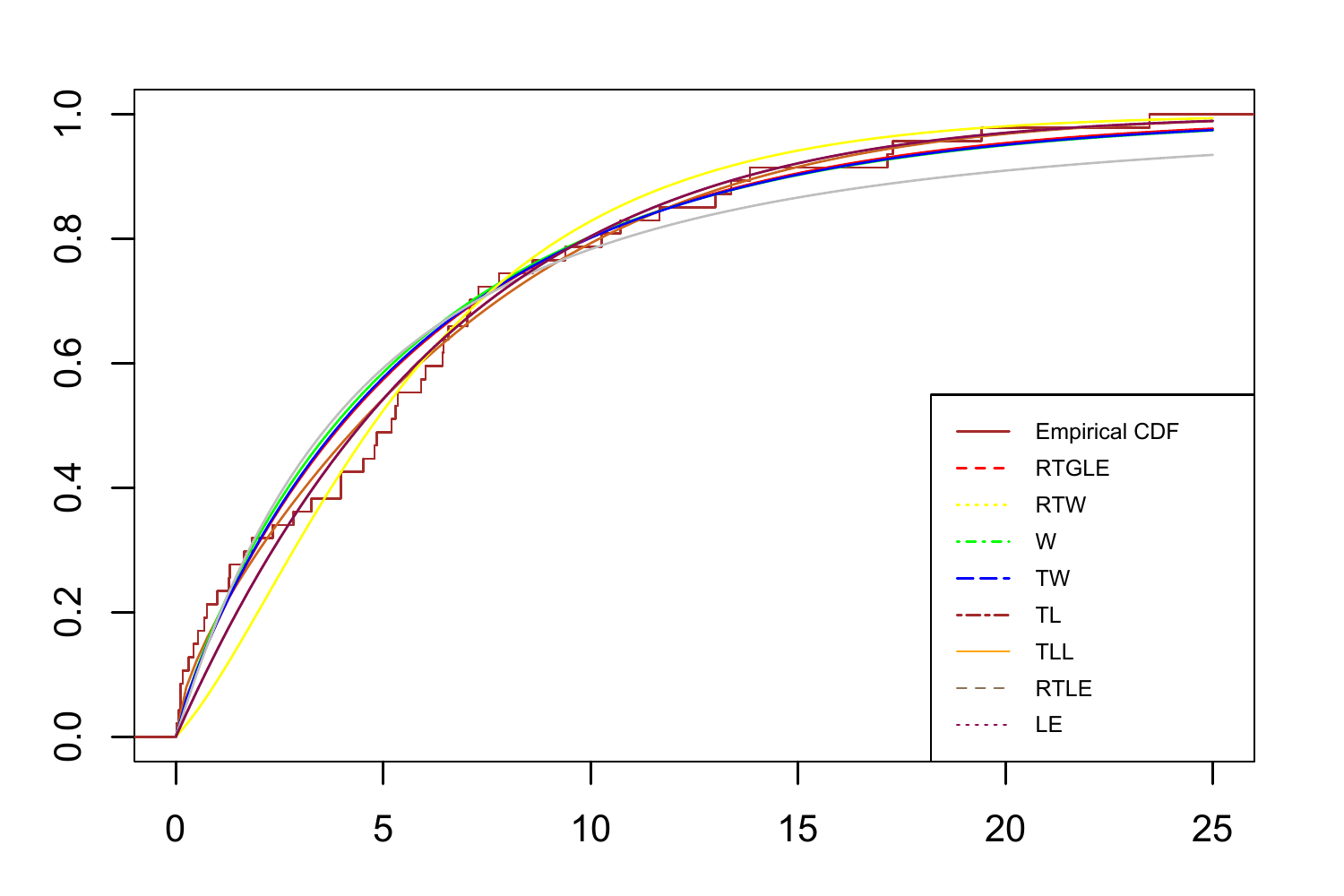} 
	\end{figure}
\section{Conculding Remarks}\label{sec8}
This paper investigated the proposed RT-generalized linear exponential (RTGLE) distribution and its properties IFR, DFR and BTFR. We have also discussed various statistical properties and derived different estimators using multiple estimation methods of the proposed distribution. A rigorous simulation study has been carried out before examining real data to estimate and infer the proposed distribution. The simulation results establish that an MC algorithm works well for the distribution and provides small MSEs, even for the small sample sizes. Further, the significance of the proposed distribution is tested by analyzing the lifetime data set and the results show the dominant nature of the proposed distribution. From the above analyses, we may conclude that the proposed distribution is beneficial in real-life scenarios. The results of the proposed RTGLE distribution showed its flexibility and effectiveness for the artificial and real population as well. Hence encourages its usage in the future and can recommend them for their practical applications in real-life problems. We have used R programming to perform all the simulations.

\end{document}